\documentclass[11pt]{amsart}

\usepackage{times}

\usepackage[left=1in,right=1in,top=1.5in,bottom=1.5in]{geometry}
\usepackage{amssymb}
\usepackage{latexsym, amsmath, amscd,amsthm}
\usepackage{graphicx}
\usepackage[percent]{overpic}
\usepackage{units}
\usepackage{hyperref}
\usepackage{color}
\usepackage{xcolor}

\usepackage{diagrams}

\newtheorem{theorem}{Theorem}

\newtheorem{lemma}[theorem]{Lemma}
\newtheorem{proposition}[theorem]{Proposition}
\newtheorem{corollary}[theorem]{Corollary}

\theoremstyle{definition}
\newtheorem{definition}[theorem]{Definition}

%\numberwithin{equation}{section}

% make a small change

\newcommand{\R}{\mathbb{R}}
\newcommand{\Z}{\mathbb{Z}}

\newcommand{\so}{\operatorname{SO}(3)}
\newcommand{\Orth}{\operatorname{O}(3)}

\newcommand{\Thi}{\operatorname{Thi}}
\newcommand{\Strut}{\operatorname{Strut}}
\newcommand{\Kink}{\operatorname{Kink}}
\newcommand{\cross}{\times}
\newcommand{\reach}{\operatorname{reach}}

\newcommand\Len{\operatorname{Len}}
\newcommand{\sot}{\operatorname{SO}(2)}
\newcommand{\ot}{\operatorname{O}(2)}
\newcommand\Rop{\operatorname{Rop}}
\def\co{\colon\!}
\newcommand{\norm}[1]{\left\| #1\right \|}
\newcommand{\pd}[2]{\operatorname{pd}(#1,#2)} %or pd(\tan{#1}L,#2) or ..
\newcommand{\psit}[2]{\psi(#1,#2)}  %or \psi(\tan{#1}L,#2) or ...
  %or r(\tan{#1}L,#2) or ...

\newcommand{\CL}{\operatorname{Osc}L}
\newcommand{\CLb}{\overline{\CL}}
\newcommand{\Sym}{\operatorname{Sym}}
\newcommand{\MCG}{\operatorname{MCG}}

\newcommand{\gCL}{\operatorname{Osc}gL}
\newcommand{\gCLb}{\overline{\gCL}}

\newcommand{\imL}{\operatorname{Image}(L)}

\newcommand{\Isom}{\operatorname{Isom}(\R^3)}

% The next few commands refer to specific theorems in the Ropelength Criticality paper (arxiv version). Since these might eventually change, I'm encapsulating them here as commands.
\newcommand{\firstvariationofR}{Lemma~4.2 }
\newcommand{\firstvariationofThi}{Theorem~4.5 }
\newcommand{\firstvariationofThisuperlinear}{Corollary~4.6 }
\newcommand{\ropelengthcriticality}{Definition~4.8 }
\newcommand{\minimizersexist}{Corollary~3.20 }
\newcommand{\thicknessuppersemicontinuous}{Proposition~3.19 }

\begin{document}
\title
     {Symmetric Criticality for Tight Knots}
     
\author{Jason Cantarella}
%\givenname{Jason}
%\surname{Cantarella}
\address{Department of Mathematics,
University of Georgia,
Athens, GA 30602}
\email{jason@math.uga.edu}

\author{Jennifer Ellis}
%\givenname{Jennifer}
%\surname{Ellis}
\address{Department of Mathematics,
Mercer University,
Macon, GA}
\email{jenn.c.ellis@gmail.com}

\author{Joseph H.G. Fu}
%\givenname{Joseph}
%\surname{Fu}
\address{Department of Mathematics,
University of Georgia,
Athens, GA 30602}
\email{fu@math.uga.edu}

\author{Matt Mastin}
%\givenname{Matt}
%\surname{Mastin}
\address{Athens, GA}
\email{matt.mastin@gmail.com}

%\author{Jason Cantarella}
%\altaddress{University of Georgia, Mathematics Department, Athens GA}
%\noaddress

\begin{abstract} 
We prove a version of symmetric criticality for ropelength-critical knots. Our theorem implies that a knot or link with a symmetric representative has a ropelength-critical configuration with the same symmetry. We use this to construct new examples of ropelength critical configurations for knots and links which are different from the ropelength minima for these knot and link types.
\end{abstract}
\date{\today}

\maketitle
\section{Introduction}\label{sect:intro} 

Our goal in this paper is to investigate the shapes of knotted tubes of fixed radius and circular cross section with minimal length. This is known as the \emph{ropelength problem}: minimize the length of a knotted or linked curve $L$ subject to the constraint that the curve has ``thickness one''. There are many equivalent characterizations of the unit thickness constraint\cite{MR2000c:57008,MR99k:57025,Cantarella:2011vq}, but the simplest uses Federer's concept of ``reach''~\cite{MR22:961}: a curve has unit thickness if every point in a unit-radius neighborhood of the curve has a unique nearest neighbor on the curve. In previous work, some of us (Cantarella and Fu) developed a criticality theory for curves with a thickness constraint~\cite{Cantarella:2006by, Cantarella:2011vq}. We gave a kind of Euler-Lagrange equation for curves which were min-critical for length when constrained by thickness which allowed us to characterize some families of critical curves explicitly and to solve explicitly for critical representatives of other link types (such as the Borromean rings). 

An old idea in analyzing geometric functionals is the principle of \emph{symmetric criticality} as in the classic paper of Palais~\cite{Palais:1979ud}. The principle states that when a group $G$ acts on a smooth manifold $M$ by diffeomorphisms, fixing a submanifold $\Sigma$ of $M$ and a smooth function $f \co M \rightarrow \R$, we expect that $p \in \Sigma$ will be critical for $f$ on $M$ if and only if $p$ is critical for $f$ on $\Sigma$. That is, we need only consider the effect of \emph{symmetric} variations of $p$ on $f$ to decide whether all variations of $p$ have directional derivative of $f$ equal to zero. Palais gives hypotheses on $G$ and $M$ under which the principle is guaranteed to hold.

The idea of this paper is to prove a similar symmetric criticality principle for tight knots. Although we are dealing with constrained criticality with a one-sided nonsmooth constraint, so that Palais' theorems do not directly apply, the criticality theory from~\cite{Cantarella:2011vq} will prove adaptable to this situation. We will then use our theory to analyze a few interesting examples. We first consider the case of $(p,q)$ torus knots, showing that any such knot has at least two distinct ropelength-critical configurations, one with $p$-fold and one with $q$-fold rotational symmetry.
Coward and Hass~\cite{Coward:2012wh} have recently shown that a split link composed of a connect sum of two nontrivial knots and an unknotted component has a configuration which cannot be isotoped to the global ropelength minimum for the link without strictly increasing the length of the unknotted component. We analyze similar configurations to show that they have mirror-symmetric ropelength critical configurations which are different from their ropelength global minima. Last, we consider an example due to Sakuma~\cite{MR1006701} which we observe must have at least 4 non-congruent $\Z/2\Z$ (2-fold rotationally symmetric) ropelength-critical configurations.

Since Hatcher's proof of the Smale Conjecture~\cite{newkey26}, we have known that the space of unknotted curves is contractible. It has long been conjectured that there is a natural ``energy function'' on unknots whose gradient flow realizes this contraction by evolving every unknot to the round circle~\cite{MR94j:58038}. In particular, such an energy flow would be a natural unknot recognition algorithm. For the links in our examples, we show that the gradient flow of ropelength may encounter critical points before reaching the global minimum energy configuration, and hence that ropelength gradient flow is not a recognition algorithm for these knots and links. 

If these theorems could be extended to unknots, it would rule out ropelength (and, presumably, many other similar energies) as candidates for an unknot recognition flow. There is every reason to believe that there are ropelength critical unknots different from the round circle-- Pieranski et.\ al.~\cite{Pieranski:2001ux} have given convincing numerical evidence, for example. Such configurations are often called ``Gordian'' unknots. We do not know yet whether symmetric criticality can help find Gordian unknots. It follows from work of Freedman and Luo~\cite{Freedman:1995ih} that for any subgroup $G$ of $O(3)$, the space of $G$-invariant unknots is path-connected and contains the round circle. However, Freedman and Luo do not prove that the space of $G$-invariant unknots is contractible. If there was a $G$ so that the space of $G$-invariant unknots had a nonvanishing homotopy group, we could prove the existence of a configuration $L$ of the unknot which was critical among $G$-invariant unknots. Our theorems below would then imply that $L$ was ropelength-critical. 

\section{Thickness and its first variation}

We begin with some definitions.
\begin{definition}[Federer \cite{MR22:961}]
The \textbf{reach} of a closed space curve is the largest $\epsilon$ so that any point in an $\epsilon$-neighborhood of the curve has a unique nearest neighbor on the curve. For space curves, we also refer to reach as half of the \emph{thickness} of the curve.
\end{definition}

It is a classical theorem of Federer that curves with positive thickness are $C^{1,1}$. Given this definition, we can define
\begin{definition}
The \emph{ropelength} $\Rop(L)$ of a curve $L$ in $R^3$ is the ratio of the length of the curve to its thickness. The ropelength $\Rop([L])$ of a link type $[L]$ is the minimum ropelength of all curves in the link type $[L]$.
\end{definition}
It is well known that ropelength minimizing curves or ``tight knots'' exist in every link type (see \cite{MR2003j:57010,MR2000c:57008,Cantarella:2002dx,Cantarella:2011vq} for variants of this theorem). We see in Figure~\ref{fig:thick} that the thickness of a curve $L$ is controlled by both curvature and chordlengths~(cf.~\cite{Cantarella:2011vq}).

\begin{figure}[t]
  \includegraphics{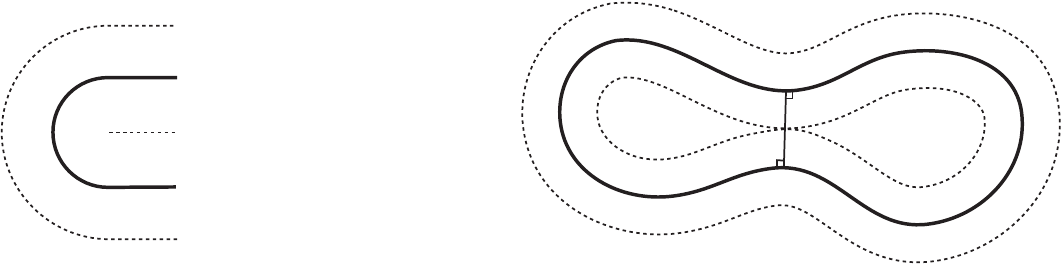}
  \caption{The thickness of a smooth curve $\gamma$ is controlled by
    curvature (as in the left picture), and chordlengths (as in the right picture).}
  \label{fig:thick}
\end{figure}
In order to use our theory below, we need to make this intuition more precise (and deal with some technical issues). Following Figure~\ref{fig:thick}, it would be reasonable to define the ``chordlength'' contribution to thickness in terms of local minima of the self-distance function $d(x,y) = \norm{x - y}$ on $L \cross L$. However, this function has its global minimum of zero along the diagonal (where it is not differentiable), which is not the information we would like to capture. It turns out that nonzero local minima of self-distance are characterized by having the chord $x y$ perpendicular to the tangent vector to $L$ at both endpoints. We can preserve these lengths (but eliminate the problem of the diagonal) with a  technical modification.
\begin{definition}
Let $\psit xy$ be the angle between the chord $xy$ and the normal plane $N_x L$ to $L$ at $x$. Given a link~$L$, the \emph{penalized distance} between two distinct points $x,y\in L$ is
\begin{equation*}
\pd xy := \norm{x - y}\,\sec^2\psit xy.
\end{equation*}
For $y=x$, we set $\pd xx=\infty$. We say that the chord $xy$ is in $\Strut(L)$ if $\pd xy = \Thi(L)$ and $xy$ is in both $N_x L$ and $N_y L$.
\label{def:struts}
\end{definition}

The ``curvature'' contribution to thickness is harder to pin down, since the curves we are dealing with are only $C^{1,1}$. Consider the space $C_3$ of all oriented pointed circles in~$\R^3$,
which we identify with $\R^3 \times TS^2$ by taking $(p,C)$ to correspond
to $(p;T,\kappa) \in  \R^3 \times TS^2$, where $T$ is the oriented unit
tangent to~$C$ at~$p$ and $\kappa$ is its curvature vector there.
Let $R(p,T,\kappa):= \nicefrac 1{|\kappa|} \in (0,\infty]$
be the radius function.
In this formulation the circles $C$ may degenerate to lines,
with $\kappa=0$ and $R=\infty$.
Let $\Pi$ be the projection $\Pi\colon(p,C)\mapsto p$.

Given a $C^{1,1}$ link~$L$, by Rademacher's theorem the set $E$ on which the second derivative
exists has full measure. Thus both the oriented unit tangent vector $T(x)$ and the curvature vector $\kappa(x)$ are well defined for $x \in E$. We let $\CL\subset C_3$ be the set of all
osculating circles:
\begin{equation*}
\CL := \bigl\{\bigl(x,T(x),\kappa(x)\bigr) : x\in E\bigr\}\subset C_3.
\end{equation*}
Its closure $\CLb$ is a compact subset of $C_3$ since $|\kappa|$ is
bounded on~$E$.  Note that $T=T(x)$ for any $(x,T,\kappa)\in\CLb$,
while of course $\kappa\perp T$ is some normal vector; thus we can
view $\CLb$ as a subset of the normal bundle to~$L$.
\begin{definition}
An osculating circle $c \in \CLb$ is said to be a \emph{kink} if its radius $R(c)$ is equal to $\Thi(L)$.
\label{def:kinks}
\end{definition}

We can define a variation $\xi$ of a closed curve $L \in \R^3$ to be the initial velocity field of a smooth family $\phi^t$ for $t \in (-\epsilon,\epsilon)$ of $C^2$ diffeomorphisms of $\R^3 \rightarrow \R^3$ with $\phi^0 = \operatorname{Id}$. The first derivative of the length of a strut as $L$ is carried along by $\phi^t$ is easy to calculate. In~\cite{Cantarella:2011vq}, \firstvariationofR, we show that $\phi^t$ induces a flow on the space of pointed circles $C_3$ and compute the first derivative $\delta_\xi R(c)$ of the radius of a given circle at time $0$ as a function of the variation field $\xi$. Combining these calculations, in \firstvariationofThi of~\cite{Cantarella:2011vq}, we give a formula for the first variation of the thickness of $\gamma$ under $\xi$, assuming that $L$ is $C^{1,1}$ and $\Thi(L) < \infty$:
\begin{align}
\delta_\xi\Thi (L)
   &:= \frac{d \, \Thi(\phi^t(L))}{dt^+}\biggr|_{t=0} \\
   &= \min\biggl(
        \min_{(x,y)\in\Strut(L)}
            \frac{1}{2} \Bigl<\frac {x-y}{|x-y|}, \xi_x-\xi_y\Bigr>,
	    \min_{c\in\Kink(L)} \delta_\xi R(c)
       \biggl).
\label{eqn:variation}
\end{align}
The important thing to notice about this entire apparatus is that all the definitions and functions involved are \emph{geometric}, in the sense that they are preserved by isometries of $\R^3$. Let $g \in \Isom$ be an isometry of $\R^3$. We will refer to the image of $L$ under $g$ as $gL$. We note that $g$ restricts to a map from $L \rightarrow gL$, and that $g$ induces an automorphism of $C_3$ which takes $\CLb$ to $\gCLb$ which we again denote by $g$. Further, this map preserves the radius of the circles. The derivative map $g_{*}$ also takes a variation vector field $\xi$ of $L$  to a corresponding variation $g_{*}\xi$ of $gL$, which is the first derivative of a corresponding family of diffeomorphisms $g \circ \phi^t$ at time $0$. 

We now see that
\begin{proposition}
\label{prop:invariance}
If $g$ is an isometry of $\R^3$ then 
\begin{equation*}
\delta_{g_*\xi} \Thi(gL) = \delta_\xi \Thi(L)
\end{equation*}
and $\Strut(gL) = g(\Strut(L))$, while $\Kink(gL) = g(\Kink(L))$.
\end{proposition}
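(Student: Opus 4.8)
The plan is to verify the three claimed equalities by unwinding the definitions and using the fact, emphasized just before the statement, that every ingredient in the thickness apparatus is \emph{geometric}, i.e.\ intertwined by the induced action of $g$ on $C_3$ and on variation fields. Since $g$ is an isometry, $\|gx - gy\| = \|x-y\|$ for all $x,y$, and $g$ carries the normal plane $N_xL$ to $N_{gx}(gL)$; hence the angle $\psi$ is preserved, $\psi(gx,gy) = \psi(x,y)$, and therefore $\operatorname{pd}(gx,gy) = \operatorname{pd}(x,y)$. Taking the infimum over all distinct pairs gives $\Thi(gL) = \Thi(L)$. The condition defining a strut --- that $\operatorname{pd}(x,y) = \Thi(L)$ and $xy \in N_xL \cap N_yL$ --- is then visibly $g$-equivariant, which yields $\Strut(gL) = g(\Strut(L))$. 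Similarly, $g$ induces the automorphism of $C_3$ taking $\CLb$ to $\gCLb$ and preserving the radius function $R$; since a kink is an osculating circle with $R(c) = \Thi(L)$ and both $R$ and $\Thi$ are preserved, $\Kink(gL) = g(\Kink(L))$.

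For the first variation identity, I would invoke the formula~\eqref{eqn:variation} applied to $gL$ with variation field $g_*\xi$, and then match the two minima term by term with the corresponding formula for $L$ with $\xi$. On the strut side, the index set of the minimization for $gL$ is $\Strut(gL) = g(\Strut(L))$, so a generic term is indexed by $(gx,gy)$ with $(x,y) \in \Strut(L)$; the summand is $\tfrac12\langle \tfrac{gx-gy}{\|gx-gy\|}, (g_*\xi)_{gx} - (g_*\xi)_{gy}\rangle$. Here I use that the variation $g_*\xi$ at the point $gx$ equals $g_*(\xi_x)$ (this is exactly what it means for $g_*\xi$ to be the initial velocity of $g \circ \phi^t$), so the vector in the second slot is $g_*(\xi_x - \xi_y)$, while the unit chord vector in the first slot is $g_*$ applied to $\tfrac{x-y}{\|x-y\|}$ up to the translational part of $g$, which is killed by the derivative. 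Since the linear part of $g$ is in $\Orth$, it preserves the inner product, so the summand equals $\tfrac12\langle \tfrac{x-y}{\|x-y\|}, \xi_x - \xi_y\rangle$, and the two strut-minima agree. On the kink side, I would cite \firstvariationofR\ of~\cite{Cantarella:2011vq}, which gives $\delta_\xi R(c)$ as a geometric function of the induced flow on $C_3$; equivariance of that flow under $g$ (already noted in the excerpt: $g$ induces an automorphism of $C_3$ preserving $R$ and conjugating the flow of $\phi^t$ to that of $g\circ\phi^t$) gives $\delta_{g_*\xi} R(gc) = \delta_\xi R(c)$, and combined with $\Kink(gL) = g(\Kink(L))$ the two kink-minima agree. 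Taking the minimum of the two matched pieces gives $\delta_{g_*\xi}\Thi(gL) = \delta_\xi\Thi(L)$.

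The main obstacle --- really the only point requiring care --- is the bookkeeping with the derivative map $g_*$ when $g$ has a nontrivial translational part, together with making sure the cited first-variation formula for $\delta_\xi R(c)$ is genuinely expressed in $g$-equivariant terms. The translation issue is handled by observing that $\delta_\xi\Thi$ depends on $\xi$ only through first-order data ($\xi_x - \xi_y$ for struts, and the flow on jets for kinks), all of which see only the linear part of $g$; I would state this explicitly rather than leave it implicit. Beyond that, everything is a direct consequence of the geometric (isometry-invariant) nature of $\operatorname{pd}$, $\Thi$, $R$, $\CLb$, and the flow on $C_3$, so the proof is essentially a matter of writing down the equivariances and chaining them through formula~\eqref{eqn:variation}.
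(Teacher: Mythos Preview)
Your proposal is correct and follows essentially the same route as the paper: both argue by chasing definitions and invoking the isometry-invariance of $\Thi$, $\operatorname{pd}$, $R$, and the induced flow on $C_3$, then match the strut and kink terms of~\eqref{eqn:variation} separately. The paper's proof is terser and obtains $\Thi(gL)=\Thi(L)$ directly from the reach definition rather than from the penalized distance (which by itself would not suffice, since thickness is also controlled by curvature), but otherwise the two arguments coincide.
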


\begin{proof}
This is an exercise in chasing definitions. Reach (and hence thickness) is clearly preserved by the isometry $g$. Examining Definition~\ref{def:struts}, we see that penalized distance is preserved as well. This means that the strut set of $gL$ is the image of the strut set of $L$. Examining~\eqref{eqn:variation}, this shows that the portion of $\delta_\xi \Thi$ coming from the strut set is preserved by $g$. A similar argument shows that the kink set of $gL$ is the image of the kink set of $L$, keeping in mind that $g$ preserves radius as a map $C_3 \rightarrow C_3$. The only point that might present difficulty is the fact that $\delta_{g_*\xi} R(gC) = \delta_{\xi} R(c)$. This can be argued directly from the fact that $g$ preserves the radius function $R$ or indirectly from the explicit formula for $\delta_\xi R(c)$ in \firstvariationofR of~\cite{Cantarella:2011vq}.
\end{proof}

The other important fact about the first variation of thickness that we'll need appears as \firstvariationofThisuperlinear in~\cite{Cantarella:2011vq}. We reproduce it here:
\begin{corollary}\label{cor:superlinear}
Suppose $L$ is a $C^{1,1}$ curve with $\reach(L)<\infty$.
Then the operator $\xi \mapsto \delta_\xi \Thi(L)$ is superlinear.
That is, for $a \ge 0$ and vector fields $\xi$ and $\eta$, we have
$$
\delta_{a\xi}\Thi(L) = a \delta_\xi \Thi(L), \qquad
\delta_{\xi+ \eta} \Thi(L) \ge  \delta_\xi \Thi(L) + \delta_\eta \Thi(L).
$$
\end{corollary}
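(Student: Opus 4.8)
The plan is to extract everything from the first‑variation formula \eqref{eqn:variation}, which already displays $\delta_\xi\Thi(L)$ as a minimum of two pieces: an infimum over $\Strut(L)$ of the quantities $\tfrac12\langle\tfrac{x-y}{|x-y|},\xi_x-\xi_y\rangle$, and an infimum over $\Kink(L)$ of the radius variations $\delta_\xi R(c)$. The first step is to observe that, for each \emph{fixed} strut $(x,y)$ and each \emph{fixed} kink $c$, the maps $\xi\mapsto\tfrac12\langle\tfrac{x-y}{|x-y|},\xi_x-\xi_y\rangle$ and $\xi\mapsto\delta_\xi R(c)$ are \emph{linear} in $\xi$. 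For the strut term this is immediate, since $\xi\mapsto\xi_x-\xi_y$ is linear and pairing with the fixed unit vector $\tfrac{x-y}{|x-y|}$ is linear. For the kink term it is exactly the content of \firstvariationofR of \cite{Cantarella:2011vq}: a kink has finite radius $\Thi(L)$, so $c$ lies in the region where $R=1/|\kappa|$ is a smooth function on $C_3$, the family $\phi^t$ induces a smooth path through $c$ in $C_3$ whose initial velocity depends linearly on the $2$‑jet of $\xi$ at $x$, and hence $\delta_\xi R(c)$ is a linear functional of $\xi$; the explicit formula in \firstvariationofR exhibits this linearity directly.

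Granting this, $\xi\mapsto\delta_\xi\Thi(L)$ is a pointwise infimum of a family $\{\ell_i\}$ of linear functionals of $\xi$, indexed by $\Strut(L)$ together with $\Kink(L)$, and the corollary becomes the general fact that such an infimum is superlinear. For $a\ge 0$ the minimum commutes with multiplication by $a$, so $\delta_{a\xi}\Thi(L)=\min_i a\,\ell_i(\xi)=a\min_i\ell_i(\xi)=a\,\delta_\xi\Thi(L)$; and a pointwise infimum of linear functionals is concave, so combining concavity with this positive homogeneity yields
\[
\delta_{\xi+\eta}\Thi(L)=\delta_{\frac12(2\xi)+\frac12(2\eta)}\Thi(L)\ \ge\ \tfrac12\,\delta_{2\xi}\Thi(L)+\tfrac12\,\delta_{2\eta}\Thi(L)=\delta_\xi\Thi(L)+\delta_\eta\Thi(L),
\]
which is the desired superadditivity.

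I do not expect a serious obstacle here; the only points needing care are bookkeeping ones. First, one must restrict to $a\ge 0$: the identity $\min_i(a\,\ell_i)=a\min_i\ell_i$ fails for $a<0$, which is precisely why homogeneity is one‑sided and why $\delta_\xi\Thi(L)$ was defined as a right‑hand derivative to begin with. Second, to keep the manipulations legitimate one should note that $\Strut(L)$ and $\Kink(L)$ are compact (closed subsets of $L\times L$ and of the compact set $\CLb$, respectively) and that at least one of them is nonempty when $\reach(L)<\infty$, so the infima above are attained and finite, i.e.\ $\delta_\xi\Thi(L)\in\R$. The genuine analytic input — that $\phi^t$ induces a well‑defined flow on $C_3$ and that $\delta_\xi R(c)$ has the asserted linear form — is entirely contained in \firstvariationofR, so in this argument it is only invoked, not reproved.
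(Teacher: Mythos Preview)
Your argument is correct: expressing $\delta_\xi\Thi(L)$ via \eqref{eqn:variation} as a minimum of linear functionals (linear for struts by inspection, linear for kinks by \firstvariationofR of \cite{Cantarella:2011vq}) and then using that a pointwise minimum of linear functionals is positively homogeneous and superadditive is exactly the right approach. Note, however, that the present paper does not actually prove this corollary at all; it simply quotes it verbatim from \cite{Cantarella:2011vq} (as \firstvariationofThisuperlinear there), so there is no ``paper's own proof'' to compare against here---your derivation is the standard one and is presumably what appears in the cited source.
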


\section{Symmetric Links}

We now establish some basic facts about symmetric links. We start by fixing a compact subgroup $G$ of $\Isom$. Notice that this means that $G \subset \Orth$, since any element with a nonzero translation part (such a screw-motion) generates a noncompact subgroup of $\Isom$. 
\begin{definition}
A knot or link $L$ is said to be \emph{(oriented) $G$-invariant} if the action of $G$ on $\R^3$ restricts to an (orientation-preserving) action on $L$. A variational vector field $\xi$ is said to be \emph{$G$-invariant} if it is invariant under the action of $G$ on $\R^3$. 
\end{definition}
We note that a $G$-invariant variational vector field $\xi$ is invariant under $G$ everywhere on $\R^3$. We now need to distinguish between a link and its parametrization. Every $n$-component link $L$ is parametrized by a locally constant-speed parametrization $L \co S^1 \sqcup \cdots \sqcup S^1 \rightarrow \R^3$. This parametrization is unique up to choice of basepoints on the circles (which are all unit circles). 

We can refer to a point in the disjoint union $S^1 \sqcup \cdots \sqcup S^1$ by $(\phi,i)$ where $\phi$ is an angle and $i$ an index denoting which circle the angle is taken on. The isometry group of this disjoint union of circles is the semidirect product $\ot^n \rtimes S_n$, where an element $\gamma = (\gamma_1,\dots,\gamma_n,p)$ acts on a point $(\phi,i)$ by 
\begin{equation*}
\gamma(\phi,i) = (\gamma_i (\phi), p(i))
\end{equation*}
An isometry of $\R^3$ preserves the speed of a parametrization, so $L$ is invariant under $g \in \Isom$ if and only if there is a corresponding $\hat{g} \in \ot^n \rtimes S_n$ so that the diagram below commutes:
\begin{equation*}
\begin{diagram}
 S^1 \sqcup \cdots \sqcup S^1 & \rTo_L & \imL \subset \R^3   \\
\dTo_{\hat{g}}  &  & \dTo_g              \\
 S^1 \sqcup \cdots \sqcup S^1 & \rTo_L & \imL \subset \R^3   \\
\end{diagram}
\end{equation*}
We note that $L$ is \emph{oriented}-invariant under $g$ if and only if this $\hat{g} \in \sot^n \rtimes S_n$.

We now define symmetric link types.
\begin{definition}
Fixing a compact subgroup $G$ of $\sot$, we say that a \emph{symmetric link type} $[L]_G$ is the equivalence class of $G$-invariant curves which are isotopic to $L$ through $G$-invariant curves.
\end{definition}
Symmetric link types are different from topological link types. For instance, we will see that not every topological link type \emph{has} a $G$-invariant representative. Further, even when a topological link type $[L]$ has $G$-invariant representatives $L_0$ and $L_1$, the symmetric link types $[L_0]_G$ and $[L_1]_G$ may be different if every isotopy between $L_0$ and $L_1$ breaks the symmetry. In this case, we will be able to show that each of $[L_0]_G$ and $[L_1]_G$ contains a ropelength-critical configuration in $[L]$ and hence generate more than one ropelength critical configuration in $[L]$. We note that this approach will not work for unknots: it follows from Freedman-Luo~\cite{Freedman:1995ih} that for any $G$, the space of $G$-invariant unknots is connected. Hence this space forms a single symmetric link type whose ropelength minimizer is the round circle.

\subsection{Existence of symmetric minimizers}

We now establish the existence of ropelength minimizing curves in a symmetric link type $[L]_G$. 
\begin{lemma}
If $L_i \rightarrow L$ with respect to the $C^1$ metric, and each $L_i$ is (oriented) $G$-invariant, then $L$ is (oriented) $G$-invariant.
\label{lem:limits are invariant}
\end{lemma}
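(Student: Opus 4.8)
The plan is to work at the level of parametrizations and exploit the compactness of the finite-dimensional group of reparametrizing symmetries. Recall that $L_i$ being oriented $G$-invariant means precisely that for every $g \in G$ there is an element $\hat g_i \in \sot^n \rtimes S_n$ with $g \circ L_i = L_i \circ \hat g_i$, where $L_i \co S^1 \sqcup \cdots \sqcup S^1 \to \R^3$ denotes the locally constant-speed parametrization (which we take to be converging in $C^1$, as is implicit in the hypothesis $L_i \to L$). For the unoriented statement, replace $\sot^n \rtimes S_n$ by $\ot^n \rtimes S_n$ throughout. The goal is to produce, for each fixed $g$, a single $\hat g \in \sot^n \rtimes S_n$ with $g \circ L = L \circ \hat g$; since $g$ is arbitrary, this is exactly the assertion that $L$ is oriented $G$-invariant.

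First I would fix $g \in G$ and consider the sequence $(\hat g_i) \subset \sot^n \rtimes S_n$. The group $\sot^n \rtimes S_n$ is compact, being a product of circles with a finite group, so after passing to a subsequence we may assume $\hat g_i \to \hat g$ for some $\hat g$, and the limit again lies in $\sot^n \rtimes S_n$ since that group is closed.

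Next I would pass to the limit in the identity $g \circ L_i = L_i \circ \hat g_i$. The left-hand side converges uniformly to $g \circ L$ because $L_i \to L$ uniformly and $g$ is an isometry. For the right-hand side I would use the elementary fact that if $f_i \to f$ uniformly and $x_i \to x$ with $f$ continuous, then $f_i(x_i) \to f(x)$: for each $(\phi,j)$ in the domain we have $\hat g_i(\phi,j) \to \hat g(\phi,j)$, hence $L_i(\hat g_i(\phi,j)) \to L(\hat g(\phi,j))$. Therefore $g \circ L = L \circ \hat g$ with $\hat g \in \sot^n \rtimes S_n$, as required. (Carrying the same argument out with $C^1$ rather than merely $C^0$ convergence matches the tangential data as well, though this is not needed here. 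For the unoriented conclusion the same argument applies verbatim, or one may bypass parametrizations entirely: $g\cdot\operatorname{Image}(L_i) = \operatorname{Image}(L_i)$ and $g$ is an isometry, so taking Hausdorff limits gives $g\cdot\operatorname{Image}(L) = \operatorname{Image}(L)$.)

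The only point requiring care — and the main obstacle, such as it is — is that the symmetry $\hat g_i$ witnessing the invariance of $L_i$ may depend on $i$, so one cannot pass to the limit in $g \circ L_i = L_i \circ \hat g_i$ directly; the subsequence extraction using compactness of $\sot^n \rtimes S_n$ is precisely what repairs this. Everything else is a routine limiting argument, and since we treat one $g \in G$ at a time there is no need for any uniformity over $G$, so the argument is insensitive to whether $G$ is finite or infinite.
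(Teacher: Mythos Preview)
Your argument is correct and follows essentially the same line as the paper's proof: fix $g \in G$, extract a convergent subsequence of the reparametrizations $\hat g_i$ using compactness of $\ot^n \rtimes S_n$ (or $\sot^n \rtimes S_n$), and pass to the limit pointwise in $g \circ L_i = L_i \circ \hat g_i$. The paper's version is slightly terser but there is no substantive difference in method.
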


\begin{proof}
Fix any $g \in G$. Since each $L_i$ is (oriented) invariant under $g$, there are $\hat{g}_i$ in $\ot^n \rtimes S_n$ (or $\sot^n \rtimes S_n$) given by $\hat{g}_i = L_i^{-1} \circ g \circ L_i$. Since $\ot^n \rtimes S_n$ is compact, we may pass to a subsequence and assume $\hat{g_i} \rightarrow \hat{h}$ in $\ot^n \rtimes S_n$. Note that if each $g_i$ was in the compact subspace $\sot^n \rtimes S_n$ of $\ot^n \rtimes S_n$, the limiting $\hat{h}$ is also in $\sot^n \rtimes S_n$.

Since $L_i(\phi,j) \rightarrow L(\phi,j)$ for all $(\phi,j)$, $g \circ L_i(\phi,j) \rightarrow g \circ L(\phi,j)$. Further, $g \circ L_i(\phi,j) = L_i(\hat{g}_i(\phi,j)) \rightarrow L(\hat{h}(\phi,j))$. Thus $g \circ L(\phi,j) = L(\hat{h}(\phi,j))$, showing that $L$ is invariant (or oriented-invariant) under $g$ as desired. 
\end{proof}

\begin{proposition} For any symmetric link type $[L]_G$, if $S$ is the set of curves $L$ in $[L]_G$ with $\Thi(L) \geq 1$, then there exists a length minimizing curve $L_0$ in $S$.
\label{prop:minimizers exist}
\end{proposition}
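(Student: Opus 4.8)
The plan is to run the Arzel\`a--Ascoli argument that proves existence of unconstrained ropelength minimizers (compare \minimizersexist of~\cite{Cantarella:2011vq}) and then supply the two ingredients special to the symmetric setting: that a $C^1$-limit of $G$-invariant curves is $G$-invariant, and that this limit lies again in $[L]_G$, not merely in the topological type $[L]$. First I would note that $S\neq\emptyset$: by hypothesis $[L]_G$ contains a $G$-invariant curve $L$ of some positive thickness $t$, and since dilation about the origin commutes with the action of $G\subset\Orth$ and scales thickness linearly, $\tfrac1t L$ is a $G$-invariant curve of thickness $1$ joined to $L$ through the $G$-invariant isotopy of dilations; hence $\ell:=\inf_{L\in S}\len(L)$ is finite. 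Fix a sequence $L_i\in S$ with $\len(L_i)\to\ell$.

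For compactness, parametrize each $L_i$ with locally constant speed. The bound $\Thi(L_i)\ge 1$ forces $|\kappa|\le 2$ almost everywhere, so the unit tangent fields are uniformly Lipschitz in arclength, while the constant speeds are bounded above by the uniform length bound and below because a closed curve of curvature at most $2$ has length at least $\pi$. After translating each $L_i$ to put its center of mass---a $G$-fixed point---at the origin, which is a translation by a $G$-fixed vector and hence preserves both $G$-invariance and the symmetric link type, the curves lie in a fixed ball, and Arzel\`a--Ascoli yields a subsequence converging in $C^1$ to a curve $L_0$. Then $\len(L_0)=\ell$ since length is continuous along this convergence; $\Thi(L_0)\ge\limsup_i\Thi(L_i)\ge 1$ by upper semicontinuity of thickness (\thicknessuppersemicontinuous of~\cite{Cantarella:2011vq}); and $L_0$ is $G$-invariant by \lem{limits are invariant}. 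It remains to check $L_0\in[L]_G$.

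This last point is the one step going beyond the standard proof, and the one I expect to be the main obstacle: producing a \emph{$G$-equivariant} isotopy from $L_0$ back to the nearby $L_i$. The idea is that the ambient isotopy witnessing ``$C^1$-close curves are isotopic'' can be built canonically from one of the two curves, so it automatically respects whatever symmetry they share. Concretely, $\reach(L_0)=\tfrac12\Thi(L_0)>0$, so $L_0$ has an embedded tubular neighborhood $U$ of positive radius; for $i$ large, $L_i\subset U$ is transverse to the normal disks and meets each exactly once, so the nearest-point retraction $\pi\colon U\to L_0$ restricts to a diffeomorphism $L_i\to L_0$, the straight-line homotopy $x\mapsto(1-s)x+s\,\pi(x)$ carries $L_i$ to $L_0$ through embedded curves, and this extends to an ambient isotopy supported in $U$. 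Since $U$, $\pi$, and the straight-line homotopy are all determined by $L_0$ alone, and $L_0$ and $L_i$ are both $G$-invariant, the construction---and therefore the isotopy---can be taken $G$-equivariant, passing through $G$-invariant curves. Hence $L_0\in[L_i]_G=[L]_G$, so $L_0\in S$, and since $\len(L_0)=\ell$ it minimizes length on $S$. Apart from this equivariant isotopy extension, every step is the usual compactness bookkeeping.
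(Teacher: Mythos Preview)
Your proof is correct and follows essentially the same Arzel\`a--Ascoli strategy as the paper's own argument, invoking the same two outside ingredients (\lem{limits are invariant} and \thicknessuppersemicontinuous of~\cite{Cantarella:2011vq}). In fact you are more careful than the paper on the final step: the paper merely asserts that $C^1$-closeness gives $L_0\in S$, whereas you explicitly construct the $G$-equivariant tubular-neighborhood isotopy needed to conclude $L_0\in[L]_G$ rather than just $[L]$---and your center-of-mass translation to secure compactness while preserving $G$-invariance is a nice detail the paper also omits.
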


\begin{proof} 
As in \minimizersexist of \cite{Cantarella:2011vq}, we note that since every $L \in [L]_G$ contains at least one closed curve, its length is bounded below by $\pi$ for all curves with $\Thi \geq 1$ in $[L]$. Since $S$ is nonempty, in view of the Lipschitz conditions on the derivatives a standard application of Arzela-Ascoli yields a $C^1$-convergent sequence of (oriented) $G$-invariant parametrizations $L_i \in S$, converging to some $L_0$ of infimal length. By Lemma~\ref{lem:limits are invariant}, $L_0$ is (oriented) $G$-invariant. Because convergence is $C^1$, the lengths of the $L_i$ converge to the length of $L_0$. By \thicknessuppersemicontinuous in \cite{Cantarella:2011vq}, $\Thi(L_0) \geq \overline{\lim} \Thi(L_i) \geq 1$. Finally, by $C^1$ convergence, $L_0$ is isotopic to all but finitely many of the $L_i$ and in particular, $L_0 \in S \subset [L]$.
\end{proof}

\subsection{$G$-Criticality, regularity, and criticality of local minimizers}

We now recall the definition of thickness-constrained criticality from \cite{Cantarella:2011vq} and extend it to a corresponding definition of $G$-criticality for $G$-invariant links. Our ultimate goal is to prove that the (weaker) hypothesis of $G$-criticality implies the stronger condition of thickness-constrained criticality.
\begin{definition}
A $G$-invariant link $L$ is \emph{$G$-critical} if 
\begin{equation*}
\delta_\xi \Len(L) < 0 \implies \delta_\xi \Thi(L) < 0
\end{equation*}
for all $G$-invariant variation fields $\xi$.  A $G$-invariant link is said to be \emph{strongly $G$-critical} if there exists an $\epsilon > 0$ so that for every $G$-invariant variation field $\xi$ 
\begin{equation*}
\delta_\xi \Len(L) = -1 \implies \delta_\xi \Thi (L) \leq -\epsilon.
\end{equation*}
\label{def:critical}
If $G$ is the subgroup consisting of the identity element, we say that the link is \emph{critical} or \emph{strongly critical}, omitting the group $G$.
\end{definition}
We note that this agrees with the definition of \emph{critical} and \emph{strongly critical} links in \ropelengthcriticality of \cite{Cantarella:2011vq}. An important technical condition is given by the idea of \emph{regularity}.
\begin{definition}
We say $L$ is \emph{regular} if it has a \emph{thickening field}, which is to say a variational vector field $\eta$ on $\R^3$ with $\delta_\eta \Thi(L) > 0$.  We say a $G$-invariant $L$ is $G$-regular if there exists a $G$-invariant.
\end{definition}
Though regularity can fail in certain (somewhat contrived) circumstances if we allow curves with endpoints constrained to lie on submanifolds of $\R^3$, in the setting of this paper, links are always regular and $G$-regular:
\begin{lemma}
Any closed curve $L \subset \R^3$ is regular. If $L$ is $G \subset \Orth$ invariant, then $L$ is $G$-regular as well.
\label{lem:everything is regular}
\end{lemma}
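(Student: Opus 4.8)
The plan is to produce an explicit thickening field for any closed curve $L$, and then to average it over $G$ to get a $G$-invariant one in the $G$-invariant case. The natural candidate for a thickening field is the radial expansion vector field $\eta(x) = x$ (dilation about the origin), or more precisely a smooth cutoff of it supported near $L$. Intuitively, dilating $\R^3$ by a factor $1+t$ scales all chordlengths and all radii of curvature by $1+t$, hence scales thickness by $1+t$, so $\delta_\eta \Thi(L)$ should equal $\Thi(L) > 0$. I would verify this by inspecting the first-variation formula \eqref{eqn:variation}: for the dilation field $\eta(x) = x$ one computes $\tfrac12\langle \tfrac{x-y}{|x-y|}, \eta_x - \eta_y\rangle = \tfrac12\langle \tfrac{x-y}{|x-y|}, x-y\rangle = \tfrac12|x-y|$, which on a strut equals $\tfrac12\Thi(L)\cos^2\psi \le \tfrac12\Thi(L)$ — hmm, so I should be a little careful: on a strut the chord is normal at both ends so $\psi = 0$ and this is exactly $\tfrac12\|x-y\|$; and for a strut $\pd xy = \|x-y\|\sec^2 0 = \|x-y\| = \Thi(L)$, so the strut contribution is $\tfrac12\Thi(L) > 0$. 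For the kink term, $\delta_\eta R(c)$ for a dilation is $R(c) = \Thi(L) > 0$ as well (this follows from the explicit formula for $\delta_\xi R$ in \firstvariationofR of \cite{Cantarella:2011vq}, or simply because dilation scales osculating-circle radii linearly). Hence $\delta_\eta \Thi(L) = \Thi(L) > 0$, so $\eta$ is a thickening field and $L$ is regular. One technical point: $\eta(x)=x$ is not compactly supported and the definition of a variation wants a flow of compactly supported diffeomorphisms; this is handled by multiplying $\eta$ by a smooth bump function equal to $1$ on a neighborhood of $\imL$, which changes nothing in the first-variation formula since struts and kinks only see $\eta$ along $L$.

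For the $G$-invariant case, the key observation is that $G \subset \Orth$ fixes the origin, so the dilation field $\eta(x) = x$ is already $G$-invariant: for $g \in G \subset \Orth$ we have $g_*\eta(x) = g(\eta(g^{-1}x)) = g(g^{-1}x) = x = \eta(x)$. Thus the same cutoff dilation field works, provided the cutoff bump function is chosen $G$-invariantly — and since $G$ is compact we can always average a bump function over $G$ to make it $G$-invariant while keeping it equal to $1$ near the (compact, $G$-invariant) set $\imL$. By Proposition~\ref{prop:invariance} applied with $gL = L$, the quantity $\delta_\eta\Thi(L)$ is unchanged under the $G$-action, which is consistent; more to the point, $\eta$ is a genuinely $G$-invariant variational field with $\delta_\eta \Thi(L) = \Thi(L) > 0$, so $L$ is $G$-regular.

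Alternatively — and this is perhaps the cleanest way to organize the proof once the non-equivariant case is established — one can get $G$-regularity from plain regularity by averaging: given any thickening field $\eta$ (not necessarily invariant), set $\bar\eta = \frac{1}{|G|}\sum_{g\in G} g_*\eta$ in the finite case, or $\bar\eta = \int_G g_*\eta \, dg$ using Haar measure in general. Then $\bar\eta$ is $G$-invariant by construction, and by Corollary~\ref{cor:superlinear} (superlinearity: positive scaling is preserved and sums can only increase $\delta\Thi$) together with Proposition~\ref{prop:invariance} (each $\delta_{g_*\eta}\Thi(L) = \delta_{g_*\eta}\Thi(g L) = \delta_\eta\Thi(L) > 0$ since $gL = L$), we get $\delta_{\bar\eta}\Thi(L) \ge \int_G \delta_{g_*\eta}\Thi(L)\,dg = \delta_\eta\Thi(L) > 0$. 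So $\bar\eta$ is a $G$-invariant thickening field. (In the continuous-group case one should remark that the integral of variation fields is again a variation field — this is routine since the relevant flows depend smoothly on parameters.)

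The main obstacle is really just bookkeeping rather than mathematical depth: one must confirm that the radial dilation field genuinely produces the value $\Thi(L)$ (not merely something positive) in the first-variation formula, handling both the strut term and the kink term, and one must be slightly careful about the compact-support technicality in the definition of a variation. I expect the strut term to be immediate from the computation above and the kink term to reduce to a one-line citation of \firstvariationofR. The averaging argument for $G$-regularity is then formal, modulo noting that $G$-invariance of the bump function (equivalently, of the averaged field) costs nothing because $G$ is compact and $\imL$ is $G$-invariant and compact.
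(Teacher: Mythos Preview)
Your approach is correct and matches the paper's: both use the scaling (dilation) field $\eta(x)=x$, which is automatically $\Orth$-invariant and hence $G$-invariant, and which manifestly increases thickness to first order. The paper's proof is a terse two-sentence version of your first argument; your extra care with the compact-support cutoff and the explicit strut/kink computations, as well as your alternative averaging argument, are sound but not needed here.
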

\begin{proof}
Take $\eta$ to be the field generated by scaling, which certainly has a positive first variation for thickness as it increases both lengths and radii of circles to first order. It is $\Orth$-invariant so it is invariant under any $g \in G$ if $L$ is $G$-invariant.
\end{proof}

We now show that regular local minimizers are critical.

\begin{proposition}
If $L$ is $G$-invariant, $G$-regular, and a thickness-constrained local-minimizer for ropelength among $G$-invariant curves, then $L$ is $G$-critical.
\label{prop:minimizers are critical}
\end{proposition}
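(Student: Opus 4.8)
The plan is to argue by contradiction: from a $G$-invariant variation that decreases length to first order without decreasing thickness to first order, I will manufacture a genuinely nearby $G$-invariant competitor with strictly smaller ropelength. First note that $L$, being a thickness-constrained minimizer with finite ropelength, has $\Thi(L)>0$ and hence is $C^{1,1}$ by Federer's theorem, and as a closed curve has $\reach(L)<\infty$; thus both \cor{superlinear} and the first-variation formula \firstvariationofThi of \cite{Cantarella:2011vq} apply to $L$. Now suppose $L$ is not $G$-critical, so there is a $G$-invariant variation field $\xi$ with $\delta_\xi\Len(L)<0$ and $\delta_\xi\Thi(L)\ge 0$. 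Let $\eta$ be a $G$-invariant thickening field, which exists because $L$ is $G$-regular, so $\delta_\eta\Thi(L)>0$. For $a>0$ the field $\zeta_a:=\xi+a\eta$ is again $G$-invariant, and by the superlinearity of the thickness first variation (\cor{superlinear}),
\[
\delta_{\zeta_a}\Thi(L)\ \ge\ \delta_\xi\Thi(L)+\delta_{a\eta}\Thi(L)\ =\ \delta_\xi\Thi(L)+a\,\delta_\eta\Thi(L)\ \ge\ a\,\delta_\eta\Thi(L)\ >\ 0,
\]
while the first variation of length is linear, so $\delta_{\zeta_a}\Len(L)=\delta_\xi\Len(L)+a\,\delta_\eta\Len(L)$, which remains negative once $a>0$ is small enough (using $\delta_\xi\Len(L)<0$ and the finiteness of $\delta_\eta\Len(L)$). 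Fix such an $a$, write $\zeta=\zeta_a$, and let $\phi^t$ be the flow of the associated diffeomorphism family, so that $\delta_\zeta\Len(L)<0$ and $\delta_\zeta\Thi(L)>0$.

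Next I would upgrade these infinitesimal statements to honest inequalities for small $t>0$. Since $t\mapsto\Len(\phi^t(L))$ is differentiable at $0$ with negative derivative, $\Len(\phi^t(L))<\Len(L)$ for $0<t<t_1$; and since the right derivative of $t\mapsto\Thi(\phi^t(L))$ at $0$ equals $\delta_\zeta\Thi(L)>0$ by \firstvariationofThi of \cite{Cantarella:2011vq}, we have $\Thi(\phi^t(L))>\Thi(L)$ for $0<t<t_2$. Because $\zeta$ is $G$-invariant, its flow commutes with the linear action of $G\subset\Orth$, so each $\phi^t(L)$ is $G$-invariant; moreover $\phi^t(L)\to L$ in $C^1$ as $t\to0$, and for small $t$ the path $s\mapsto\phi^s(L)$, $s\in[0,t]$, is an isotopy through embedded $G$-invariant curves of positive thickness (embeddedness for small $t$ following from $C^1$-closeness to the positive-reach curve $L$, exactly as in the non-symmetric treatment of \cite{Cantarella:2011vq}). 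Hence $\phi^t(L)$ lies in the symmetric link type $[L]_G$ for all sufficiently small $t>0$.

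Finally, fix $t$ with $0<t<\min(t_1,t_2)$ and rescale by $\lambda:=\Thi(L)/\Thi(\phi^t(L))\in(0,1)$. Since $G$ acts linearly it commutes with dilation, so $\lambda\,\phi^t(L)$ is still a $G$-invariant curve in $[L]_G$; it has $\Thi(\lambda\,\phi^t(L))=\Thi(L)$ and $\Len(\lambda\,\phi^t(L))=\lambda\,\Len(\phi^t(L))<\Len(\phi^t(L))<\Len(L)$, and it converges to $L$ in $C^1$ as $t\to0$. (Equivalently, without rescaling, $\Rop(\phi^t(L))<\Rop(L)$.) Either way this contradicts the assumption that $L$ is a thickness-constrained local minimizer for ropelength among $G$-invariant curves, so $L$ must be $G$-critical.

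The routine parts are the algebra of combining $\xi$ with the thickening field (homogeneity and superadditivity of $\delta_{(\cdot)}\Thi$ from \cor{superlinear}, linearity of $\delta_{(\cdot)}\Len$) and the observation that a $G$-invariant vector field has a $G$-equivariant flow. The one point that needs care — and it is the same point that arises in the non-symmetric case — is the passage from first-variation inequalities to a genuine nearby competitor lying in the admissible class: one must know that the positive one-sided derivative of thickness forces an actual increase in thickness for small $t$ (immediate from the definition of that derivative) and that the flowed curves remain embedded and hence stay within $[L]_G$; the only new ingredient beyond \cite{Cantarella:2011vq} is that $G$-equivariance of the flow keeps every curve in the family $G$-invariant.
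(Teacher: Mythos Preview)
Your proof is correct and follows essentially the same route as the paper: argue by contradiction, combine the offending $G$-invariant field $\xi$ with a small multiple of a $G$-invariant thickening field $\eta$ using linearity of $\delta_{(\cdot)}\Len$ and superlinearity of $\delta_{(\cdot)}\Thi$ (\cor{superlinear}) to get a $G$-invariant variation with strictly negative length variation and strictly positive thickness variation, then pass to nearby $G$-invariant curves contradicting local minimality. The paper's proof is terser---it simply asserts the existence of nearby $G$-invariant competitors once $\delta_{\xi+c\eta}\Len(L)<0$ and $\delta_{\xi+c\eta}\Thi(L)>0$---whereas you spell out the flow argument, the one-sided derivative of thickness, $G$-equivariance of the flow, and the rescaling; but this is elaboration of the same step, not a different method.
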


\begin{proof}
Suppose not. Then there exists a $G$-invariant field $\xi$ with $\delta_{\xi} \Len(L) < 0$ but $\delta_{\xi} \Thi(L) \geq 0$. Since $L$ is $G$-regular, let $\eta$ be a $G$-invariant thickening field. Now the first variation of length is linear in the vector field, so we can choose $c$ small enough that $\delta_{\xi+c\eta} \Len(L) < 0$. By contrast, the first variation of thickness is superlinear in the vector field by Corollary~\ref{cor:superlinear}, so
\begin{equation*} 
\delta_{\xi+c\eta} \Thi(L) \geq  \delta_{\xi} \Thi(L)+\delta_{c\eta} \Thi(L) > 0. 
\end{equation*}
Since $\xi+c\eta$ is $G$-invariant, there are nearby $G$-invariant curves with $\Thi > 1$ but smaller length than $L$, which contradicts the local minimality of $L$. 	
\end{proof}

For our curves, $G$-criticality will imply strong $G$-criticality.
\begin{proposition}
If $L$ is a $G$-invariant curve which is $G$-regular and $G$-critical, then $L$ is strongly $G$-critical.
\label{prop:regular and critical implies strongly critical}
\end{proposition}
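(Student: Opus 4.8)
The plan is to reduce the statement to a Hahn--Banach argument on the closed subspace of $G$-invariant variation fields, exactly parallel to the non-equivariant case. Work in the Banach space $V$ of $G$-invariant $C^1$ variation fields of $L$. By the first variation formula \eqref{eqn:variation}, every $\xi \in V$ satisfies
\[
\delta_\xi \Thi(L) \;=\; \min_{k \in K}\, \langle k, \xi \rangle,
\]
where $K \subset V^{*}$ consists of the strut functionals $\xi \mapsto \tfrac12\bigl\langle \tfrac{x-y}{|x-y|},\, \xi_x - \xi_y \bigr\rangle$ for $(x,y) \in \Strut(L)$ together with the kink functionals $\xi \mapsto \delta_\xi R(c)$ for $c \in \Kink(L)$, all restricted to $V$. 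By Proposition~\ref{prop:invariance} the sets $\Strut(L)$ and $\Kink(L)$ are well defined, and they are compact; since the strut functional depends continuously on $(x,y)$ (the chords being bounded away from the diagonal) and, by \firstvariationofR of \cite{Cantarella:2011vq}, the kink functional depends continuously on $c$ in operator norm, the set $K$ is norm-compact in $V^{*}$. Finally the first variation of length, $\xi \mapsto \delta_\xi \Len(L)$, is a bounded linear functional $d\Len \in V^{*}$.

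Now rephrase Definition~\ref{def:critical} through the displayed formula: $G$-criticality says exactly that $\langle d\Len, \xi\rangle \ge 0$ for every $\xi$ in the closed convex cone $P := \{\xi \in V : \langle k,\xi\rangle \ge 0 \text{ for all } k \in K\}$, i.e. that $d\Len$ lies in the dual cone of $P$, which by the bipolar theorem is the closed convex cone generated by $K$. This is where $G$-regularity is spent: it provides a $G$-invariant thickening field $\eta \in V$ with $\delta_\eta \Thi(L) > 0$, and by the displayed formula together with compactness of $K$ this forces $\langle k,\eta\rangle \ge \delta$ for all $k \in K$ and some $\delta > 0$. Hence $K$ --- and with it its closed convex hull $\overline{\operatorname{conv}}(K)$, which is norm-compact by Mazur's theorem --- lies in the closed half-space $\{\langle\cdot,\eta\rangle \ge \delta\}$ and misses the origin. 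A standard compactness argument (a convergent sequence of positive multiples of points of a compact set bounded away from $0$ converges to a positive multiple of such a point) then shows that $\{t w : t \ge 0,\ w \in \overline{\operatorname{conv}}(K)\}$ is already closed; being a closed convex cone containing $K$, it equals the dual cone of $P$. In short, $G$-criticality and $G$-regularity together force a Kuhn--Tucker-type decomposition $d\Len = t_0\,\bar k$ with $t_0 \ge 0$ and $\bar k \in \overline{\operatorname{conv}}(K)$.

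To finish: if $t_0 = 0$ then $d\Len = 0$, no $G$-invariant $\xi$ has $\delta_\xi\Len(L) = -1$, and the conclusion holds vacuously; otherwise set $\epsilon := 1/t_0 > 0$. For any $G$-invariant $\xi$ with $\delta_\xi\Len(L) = -1$ we get $\langle \bar k,\xi\rangle = -1/t_0 = -\epsilon$. On the other hand every $k \in K$ satisfies $\langle k,\xi\rangle \ge \min_{k'\in K}\langle k',\xi\rangle = \delta_\xi\Thi(L)$, so this lower bound persists on $\operatorname{conv}(K)$ and, by continuity, on $\overline{\operatorname{conv}}(K)$; in particular $\delta_\xi\Thi(L) \le \langle\bar k,\xi\rangle = -\epsilon$. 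This is precisely strong $G$-criticality.

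The routine parts of this are the passage to the subspace $V$, the $G$-invariance and compactness of $\Strut(L)$ and $\Kink(L)$, and the continuity of the constraint functionals. The step I expect to require the most care --- the genuine content --- is the closed-cone argument showing that the closed convex cone generated by $K$ is exactly $\{t w : t \ge 0,\ w \in \overline{\operatorname{conv}}(K)\}$, i.e. that passing to the closure adds nothing; this is where both the compactness of $\Strut(L) \cup \Kink(L)$ and, crucially, the regularity hypothesis (confining $K$ to an open half-space, so that the generated cone is salient) are indispensable. Without $G$-regularity the statement fails, so the hypothesis is used precisely here. Everything else is a verbatim parallel of the non-equivariant argument.
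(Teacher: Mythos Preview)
Your argument is essentially correct, but it is a far heavier machine than the paper deploys. The paper's proof is a three-line elementary computation: take a $G$-invariant thickening field $\eta$, scale it so that $\delta_\eta\Len(L)\le\tfrac12$, set $\epsilon:=\delta_\eta\Thi(L)>0$, and for any $G$-invariant $\xi$ with $\delta_\xi\Len(L)=-1$ observe that $\delta_{\xi+\eta}\Len(L)<0$, hence by $G$-criticality $\delta_{\xi+\eta}\Thi(L)\le 0$, and then superlinearity (Corollary~\ref{cor:superlinear}) gives $\delta_\xi\Thi(L)\le -\delta_\eta\Thi(L)=-\epsilon$. No functional analysis is invoked; only the linearity of $\delta\Len$ and the superlinearity of $\delta\Thi$ are used.

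Your route instead builds the full Kuhn--Tucker picture: compactness of $K$, the bipolar theorem, Mazur's theorem, and the salient-cone closure argument. This buys you more than the proposition asks for---you actually obtain the Lagrange-multiplier decomposition $d\Len=t_0\bar k$ with $\bar k$ in the closed convex hull of the constraint functionals, which is the content of the balance criterion developed in~\cite{Cantarella:2011vq}. That extra structure is valuable elsewhere but unnecessary here. One small slip: the kink functionals $\xi\mapsto\delta_\xi R(c)$ involve the $2$-jet of $\xi$ (see \firstvariationofR of~\cite{Cantarella:2011vq}), so your ambient space $V$ should carry at least a $C^2$ topology rather than $C^1$ for $K$ to consist of bounded functionals; with that correction the compactness and continuity claims go through as you describe.
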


\begin{proof}
Let $\eta$ be a $G$-invariant thickening field for $L$. After scaling $\eta$, we may assume that $\delta_{\eta} \Len(L) \leq \frac{1}{2}$. We define $\epsilon := \delta_\eta \Thi(L) > 0$. 

Now take any $G$-invariant $\xi$ with $\delta_{\xi} \Len(L) = -1$. We will show that $\delta_\xi \Thi(L) < -\epsilon$. Again, the first variation of length is linear, so $\delta_{\eta+\xi} \Len(L) \leq -\frac{1}{2}$. By $G$-criticality of $L$, the variation of thickness in the direction $\eta + \xi$ is non-positive. Using superlinearity of the first variation of thickness we have that 
\begin{equation*}
0 \ge \delta_{\eta+\xi} \Thi(L) \geq \delta_{\eta} \Thi(L) + \delta_{\xi} \Thi(L).
\end{equation*}
Thus $\delta_{\xi} \Thi(L) \le -\delta_{\eta} \Thi(L) = -\epsilon$, as desired.
\end{proof}
We now come to the heart of the matter. 
\begin{proposition}
If $L$ is $G$-invariant and strongly $G$-critical then $L$ is strongly critical.
\label{prop:strongly G critical implies strongly critical}
\end{proposition}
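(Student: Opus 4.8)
The plan is to show that any competitor variation can be symmetrized at no cost in either first variation, so that strong $G$-criticality applies to it directly. Let $\xi$ be an arbitrary variation field on $\R^3$ with $\delta_\xi\Len(L)=-1$; I must produce an $\epsilon>0$, depending only on $L$, with $\delta_\xi\Thi(L)\le-\epsilon$. The first step is to form the $G$-average
\[
\bar\xi := \int_G g_*\xi\, dg ,
\]
using normalized Haar measure on the compact group $G$ (in the cases needed for our examples $G$ is finite and this is simply $\tfrac1{|G|}\sum_{g\in G}g_*\xi$). I would check that $\bar\xi$ is a legitimate $G$-invariant variation field: since $g\mapsto g_*\xi$ is continuous and $G$ is compact, $\bar\xi$ is a smooth vector field on $\R^3$ and hence the initial velocity of its own flow by $C^\infty$ diffeomorphisms; and left-invariance of Haar measure gives $h_*\bar\xi=\int_G (hg)_*\xi\,dg=\bar\xi$ for every $h\in G$, i.e.\ $\bar\xi$ is $G$-invariant in the sense of Definition~\ref{def:critical}.

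Next I would compare the two first variations of $\bar\xi$ with those of $\xi$. Because $L$ is $G$-invariant, $gL=L$ for every $g\in G$, so the (elementary) isometry-invariance of the first variation of length, together with \prop{invariance} for thickness, gives $\delta_{g_*\xi}\Len(L)=\delta_{g_*\xi}\Len(gL)=\delta_\xi\Len(L)$ and $\delta_{g_*\xi}\Thi(L)=\delta_{g_*\xi}\Thi(gL)=\delta_\xi\Thi(L)$ for all $g\in G$. Since the first variation of length is linear in the variation field (as used in the proof of \prop{minimizers are critical}), integrating over $G$ yields $\delta_{\bar\xi}\Len(L)=\int_G\delta_{g_*\xi}\Len(L)\,dg=-1$. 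Since the first variation of thickness is superlinear by \cor{superlinear} --- indeed, by \eqref{eqn:variation} it is a finite minimum of linear functionals, hence concave and positively homogeneous --- Jensen's inequality for the averaging measure gives
\[
\delta_{\bar\xi}\Thi(L)\ \ge\ \int_G\delta_{g_*\xi}\Thi(L)\,dg\ =\ \delta_\xi\Thi(L).
\]

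Finally, $\bar\xi$ is $G$-invariant with $\delta_{\bar\xi}\Len(L)=-1$, so strong $G$-criticality of $L$ furnishes a constant $\epsilon>0$, depending only on $L$, with $\delta_{\bar\xi}\Thi(L)\le-\epsilon$. Combining this with the previous display, $\delta_\xi\Thi(L)\le\delta_{\bar\xi}\Thi(L)\le-\epsilon$. As $\xi$ was an arbitrary variation field with $\delta_\xi\Len(L)=-1$, this is exactly the assertion that $L$ is strongly critical, and with the same constant $\epsilon$.

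I expect the only genuine obstacle to be the passage from finite groups to general compact $G$ in the two properties of the average used above: that $\bar\xi$ is an admissible variation field, and that the superlinearity of the operator $\xi\mapsto\delta_\xi\Thi(L)$ survives integration against Haar measure. For finite $G$ both follow from \cor{superlinear} by a one-line induction, so the argument is complete as stated in the cases relevant to our examples. For general compact $G$ one writes the Haar integral as a limit of convex combinations $\tfrac1k\sum_i g_{i*}\xi$ and uses that $\xi\mapsto\delta_\xi\Thi(L)$, being a minimum of finitely many bounded linear functionals, depends continuously on the variation field in the relevant topology, so that the finite-sum inequality $\delta_{\frac1k\sum_i g_{i*}\xi}\Thi(L)\ge\frac1k\sum_i\delta_{g_{i*}\xi}\Thi(L)$ passes to the limit; a parallel approximation handles admissibility of $\bar\xi$.
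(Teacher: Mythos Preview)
Your proof is correct and follows essentially the same approach as the paper's: average the variation field over $G$, use linearity of $\delta_\xi\Len$ and superlinearity of $\delta_\xi\Thi$ together with $G$-invariance of $L$, then apply strong $G$-criticality to the averaged field. The only cosmetic differences are that the paper phrases the argument by contradiction while you argue directly, and you are somewhat more careful than the paper about justifying the passage from finite sums to the Haar integral in the superlinearity step.
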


\begin{proof}
Since $L$ is strongly $G$-critical, there exists some $\epsilon > 0$ so that for every $G$-invariant $\xi$ with $\delta_\xi \Len(L)  = -1$, we have $\delta_\xi \Thi(L) \leq -\epsilon$. We claim that the same bound holds for \emph{every} $\xi$ with $\delta_\xi \Len(L) = -1$. Suppose not. Then there exists $\xi$ with $\delta_\xi \Len(L) = -1$ and $\delta_\xi \Thi(L) > -\epsilon$. Let $\xi^G$ be the average of the vector fields $g_*\xi$ defined by integration over the compact group $G$. By linearity of $\xi \mapsto \delta_\xi \Len$, 
\begin{equation*}
	\delta_{\xi^G} \Len(L) = -1.
\end{equation*}
Now consulting the first-variation formula for thickness~\eqref{eqn:variation}, if $L$ is invariant under $g \in G$ then $\delta_{g\xi} \Thi(L) = \delta_\xi \Thi(g^{-1} L) = \delta_\xi \Thi(L)$. Thus by superlinearity of $\xi \mapsto \delta_\xi \Thi(L)$,
\begin{equation*}
	\delta_{\xi^G} \Thi(L) \geq \delta_{\xi} \Thi(L) > -\epsilon,
\end{equation*}
which contradicts strong $G$-criticality of $L$.	
\end{proof}	
	
Note that we have used the compactness of $G$ in a really essential way here to define the averaged variation $\xi^G$. However, this is less general than it might seem: $G$ is a subgroup of $\Orth$ and the only compact subgroups of $\Orth$ which are not finite are $\sot \simeq S^1$ and $\ot \simeq S^1 \cross \Z/2\Z$. We can now assemble all of these pieces into our machine for constructing symmetric critical configurations in link types:

\begin{theorem}
There is a strongly critical curve $L$ in every nonempty symmetric link type $[L]_G$. \label{thm:master theorem}
\end{theorem}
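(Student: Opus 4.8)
The plan is to assemble the propositions of this section into a short pipeline. The statement to prove is that every nonempty symmetric link type $[L]_G$ contains a strongly critical curve, where ``strongly critical'' is in the sense of Definition~\ref{def:critical} with trivial group. The strategy is: produce a ropelength minimizer within the symmetric link type, observe it is $G$-critical because it is a constrained local minimizer among $G$-invariant curves, upgrade $G$-criticality to strong $G$-criticality, and then forget the symmetry to conclude strong criticality.

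First I would invoke Proposition~\ref{prop:minimizers exist}: since $[L]_G$ is nonempty, the set $S$ of curves in $[L]_G$ with $\Thi \geq 1$ is nonempty (after rescaling a representative), so there is a length-minimizing curve $L_0 \in S$, and $L_0 \in [L]_G$. This $L_0$ is $G$-invariant by construction, and it is in particular a thickness-constrained local minimizer for ropelength among $G$-invariant curves (it is even a global minimizer within its symmetric link type). Next, Lemma~\ref{lem:everything is regular} tells us $L_0$ is $G$-regular, since any closed $G$-invariant curve with $G \subset \Orth$ admits the $\Orth$-invariant thickening field generated by scaling.

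Now I would apply Proposition~\ref{prop:minimizers are critical} to conclude that $L_0$ is $G$-critical, then Proposition~\ref{prop:regular and critical implies strongly critical} (using $G$-regularity again) to conclude that $L_0$ is strongly $G$-critical, and finally Proposition~\ref{prop:strongly G critical implies strongly critical} to conclude that $L_0$ is strongly critical. That is the entire argument: it is a concatenation of the four propositions just established, with Lemma~\ref{lem:everything is regular} supplying the regularity hypotheses they need.

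The only step that carries any real content is the last one, Proposition~\ref{prop:strongly G critical implies strongly critical}, whose proof is where the compactness of $G$ is essential (to form the averaged variation $\xi^G$ by integration over $G$) and where the superlinearity of $\xi \mapsto \delta_\xi \Thi(L)$ from Corollary~\ref{cor:superlinear} together with the isometry-invariance of Proposition~\ref{prop:invariance} are combined. Since that proposition is already proved in the excerpt, the proof of the theorem itself presents no obstacle; the work was in setting up the machine. I would simply write: ``Since $[L]_G$ is nonempty, Proposition~\ref{prop:minimizers exist} gives a ropelength minimizer $L_0$ in $[L]_G$. By Lemma~\ref{lem:everything is regular}, $L_0$ is $G$-regular. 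As a thickness-constrained local minimizer among $G$-invariant curves, $L_0$ is $G$-critical by Proposition~\ref{prop:minimizers are critical}, hence strongly $G$-critical by Proposition~\ref{prop:regular and critical implies strongly critical}, hence strongly critical by Proposition~\ref{prop:strongly G critical implies strongly critical}.''
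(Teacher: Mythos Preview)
Your proof is correct and follows exactly the same pipeline as the paper's own argument: minimizer exists (Proposition~\ref{prop:minimizers exist}), regularity (Lemma~\ref{lem:everything is regular}), $G$-critical (Proposition~\ref{prop:minimizers are critical}), strongly $G$-critical (Proposition~\ref{prop:regular and critical implies strongly critical}), strongly critical (Proposition~\ref{prop:strongly G critical implies strongly critical}). Your additional remarks about rescaling to ensure $S$ is nonempty and about where the substantive content lies are accurate and helpful but do not alter the structure of the proof.
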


\begin{proof}
As before, let $S$ be the subset of $[L]_G$ of curves with thickness at least one. By Proposition~\ref{prop:minimizers exist}, there is a length-minimizing $L_0 \in S$. This curve is regular by Lemma~\ref{lem:everything is regular}, and hence $G$-critical by Proposition~\ref{prop:minimizers are critical}, then strongly $G$-critical by Proposition~\ref{prop:regular and critical implies strongly critical}, and finally strongly critical by Proposition~\ref{prop:strongly G critical implies strongly critical}.
\end{proof}

\section{Examples}

Now that we have our main theorem, it remains to find some interesting symmetric link types. We will rely on the classification of subgroups of $\Orth$ to identify groups $G$ which may appear as symmetry groups. We recall the standard fact that any finite subgroup of $\so$ is either cyclic, dihedral, or one of a finite list of ``point groups'', and that the remaining closed subgroups of $\so$ are copies of $\sot$ and $\ot$ fixing an axis and acting on the perpendicular plane (as well as the full group, of course)~\cite{Wolf:2011tw}. It is a fact that all of these groups may appear as symmetry groups for \emph{links}, but only some may appear as symmetry groups for a \emph{knot}: 

\begin{proposition}
Suppose $L$ is a nontrivial knot which is embedded and $G$-invariant. Then $G$ is trivial, finite dihedral or finite cyclic. If $G = \mathbb{Z}/n\mathbb{Z}$, the classification of subgroups of $\Orth$ means that it is generated by $2\pi/n$-rotation around a fixed axis, and $L$ is disjoint from this axis. If $L$ is oriented $G$-invariant, then $G$ is trivial or finite cyclic.
\label{prop:symknots}
\end{proposition}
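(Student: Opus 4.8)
The plan is to leverage the classification of closed subgroups of $\Orth$ together with classical facts about periodic and freely periodic knots, organized around the following dichotomy: either $G$ contains an orientation-reversing element of $\R^3$ or it does not. First I would dispose of the continuous subgroups. If $G$ contained a copy of $\sot$ or $\ot$ fixing an axis $A$, then $L$ would be invariant under a circle's worth of rotations about $A$; since $L$ is compact and one-dimensional, its image under this action would have to be a union of circles centered on $A$ in planes perpendicular to $A$ (the orbit of any point off $A$ is such a circle, and the orbit of any point on $A$ is the point itself, which cannot happen for an embedded curve since $L$ would have to pass through the fixed axis with nontrivial rotational symmetry, forcing a self-intersection or a degenerate tangent). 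A finite union of round circles stacked along $A$ is either disconnected or a single unknotted circle, contradicting that $L$ is a nontrivial knot. Hence $G$ is finite, and by the classification it is cyclic, dihedral, or one of the finitely many point groups $T, O, I$ (possibly with orientation-reversing extensions).

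Next I would rule out the point groups and, in the dihedral case, control the structure. The key input is that a nontrivial knot admits no faithful action of a group containing ``too many'' rotation axes. Concretely: if $g \in G \cap \so$ is a nontrivial rotation about an axis $A_g$, then $L$ must be disjoint from $A_g$, for otherwise $L$ meets its own rotational axis and an embedded $C^{1,1}$ curve invariant under a nontrivial finite rotation about a line it meets is forced to be singular or self-intersecting at that point. Now a finite subgroup of $\so$ containing two rotation axes that do not coincide (as the dihedral, tetrahedral, octahedral, and icosahedral groups all do) would force $L$ to avoid two distinct lines while being invariant under the group they generate; one then argues that the only knot type compatible with the tetrahedral, octahedral, or icosahedral rotation group is the unknot — this is the statement that these groups do not act as symmetry groups of nontrivial knots, which follows from the resolution of the Smith conjecture (a nontrivial knot cannot be invariant under a rotation whose axis it is disjoint from with quotient orbifold of the wrong type) or, more elementarily, from the fact that the quotient of $S^3$ by such an action is not again $S^3$ in a way compatible with a knotted circle. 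So $G \cap \so$ is cyclic or dihedral; and if it is dihedral, $L$ being invariant under a $\pi$-rotation about a second axis perpendicular to the principal axis is permitted (e.g. strongly invertible knots), which is why dihedral survives while the oriented case does not.

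For the oriented statement I would observe that an orientation-preserving self-map of a circle $S^1$ is conjugate to a rotation, so $\hat g \in \sot^n \rtimes S_n$ restricted to the single circle is a rotation; a dihedral group cannot act on $S^1$ by orientation-preserving maps faithfully — the ``flip'' generator reverses orientation — so oriented $G$-invariance kills the dihedral part and forces $G \cap \so$ cyclic, and the continuous and point-group cases are excluded as before, leaving $G$ trivial or finite cyclic. Finally, for the cyclic case $G = \Z/n\Z$, the classification of subgroups of $\Orth$ gives a generator; if that generator were orientation-reversing on $\R^3$ it would be a rotary reflection, but the hypothesis ``$L$ is a nontrivial \emph{knot} which is \ldots $G$-invariant'' together with the statement to be proved (which asserts cyclic generated by $2\pi/n$-rotation) means we are in the subgroup of $\so$; then it is generated by rotation through $2\pi/n$ about a fixed axis, and by the disjointness argument above $L$ is disjoint from that axis.

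\medskip

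The main obstacle is the second paragraph: cleanly ruling out the tetrahedral, octahedral, and icosahedral point groups (and pinning down exactly which dihedral and cyclic actions survive) genuinely requires the positive solution of the Smith conjecture — the theorem that a nontrivial knot in $S^3$ cannot be the fixed-point set of, nor disjoint-from-the-axis invariant under, certain finite group actions — rather than any soft compactness argument. I would cite the Smith conjecture (Morgan–Bass et al.) and the known classification of symmetry groups of knots (e.g. via the geometrization of the knot complement: the symmetry group of a hyperbolic knot is finite and acts on the complement, and the spherical/Seifert-fibered cases are enumerated), and reduce the verification to checking which subgroups of $\Orth$ are realized, which is a finite check once Smith is invoked.
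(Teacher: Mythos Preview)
Your route is workable in spirit but takes a long detour around the paper's key idea, and the detour is where your acknowledged ``main obstacle'' lives. The paper does not give a full proof, but it does name the mechanism: since each $g\in G\subset\Orth$ preserves arclength along $L$, the action of $G$ on the knot induces a homomorphism $G\to\ot$, the isometry group of the parametrizing circle. This map is \emph{injective}, because a nontrivial knot is not contained in any affine plane, so an isometry of $\R^3$ fixing $L$ pointwise must be the identity. Hence $G$ embeds as a closed subgroup of $\ot$, and those are exactly trivial, finite cyclic, finite dihedral, $\sot$, or $\ot$; the two continuous cases force $L$ to be a round circle, as you already argued. The tetrahedral, octahedral, and icosahedral groups are ruled out instantly because they simply do not embed in $\ot$ (for instance, they contain noncommuting elements of odd order, whereas in $\ot$ every element of odd order lies in the abelian group $\sot$). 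No Smith conjecture, no geometrization, no case analysis of point groups is needed.

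You in fact have this idea in hand: in your oriented paragraph you invoke $\hat g\in\sot$ and use that a dihedral group cannot act faithfully on $S^1$ by rotations. That is exactly the arclength homomorphism, and it works just as well in the unoriented case with target $\ot$. Had you applied it from the outset, your entire second paragraph would disappear. The disjointness-from-axis claim also falls out of this picture: in the oriented case the generator maps to a nontrivial rotation of $S^1$, which is fixed-point free, so $L$ cannot meet the rotation axis in $\R^3$. Your local argument for disjointness (``forced to be singular or self-intersecting'') is shakier than you present it, since a $C^{1,1}$ curve can run along the axis for a while; the fixed-point-free argument on $S^1$ is the clean way to see it.
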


This basically a ``folk theorem'' (see page 2 of~\cite{Freedman:1995ih}), so we will not give a detailed proof, but just note that the key idea is to use the fact that elements of $\Orth$ preserve arclength along the knot to map $G$ to $\sot$ and study it there.
We note that if $K$ is a round circle, it is already oriented $\ot$ invariant. This is the maximal symmetry group possible for a space curve, so we cannot hope to conclude that there is an isotopy class of symmetric configurations of the unknot which does not include this configuration. 

\subsection{Symmetries of Torus Knots}

We now consider torus knots. In 1990, Moffatt conjectured~\cite{Moffatt:1990p2427} that 3-fold symmetric and 2-fold symmetric configurations of the trefoil knot should be local minimizers for a magnetic energy closely related to ropelength (cf.\ \cite{Chui:1995et} for more on this conjecture). Sullivan~\cite{newkey93} conducted some numerical experiments with a ``Menger energy'' similar to ropelength where he constructed 3-fold and 2-fold symmetric energy-minimizing configurations with Brakke's~\texttt{Evolver}. (The situation for the conformally invariant M\"obius energy analyzed in~\cite{MR94j:58039} and \cite{MR1702037} is completely different, as the 3-fold and 2-fold symmetric configurations of the trefoil are part of the same family of equivalent conformations.) Our theory is silent on the question of whether any given critical configuration is a local minimum, but can detect the existence of distinct critical configurations of the trefoil with 2-fold and 3-fold symmetries.

We recall that a knot in $\R^3$ is a periodic knot of period $n$ if there is a periodic map $f \co (\R^3,K) \rightarrow (\R^3,K)$ such that $f$ is a $2\pi/n$ rotation around a line $F$ in $\R^3$ and $F \cap K = \emptyset$. It is a classical result~(cf. \cite{Kawauchi:1996uo}) that the only periods of the $(p,q)$ torus knot are the divisors of $p$ and $q$. 

\begin{lemma}
If $K$ is a nontrivial oriented $G$-invariant $(p,q)$ torus knot, then $G$ is either the trivial group or a cyclic group of order $n$ where $n$ divides either $p$ or $q$.
\label{lem:torusknots}
\end{lemma}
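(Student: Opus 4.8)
The plan is to combine the folk-theorem restriction on symmetry groups of knots (Proposition~\ref{prop:symknots}) with the classical fact, quoted just above the lemma, that the only periods of a $(p,q)$ torus knot are the divisors of $p$ and of $q$. By Proposition~\ref{prop:symknots}, since $K$ is a nontrivial embedded knot which is \emph{oriented} $G$-invariant, $G$ must be either trivial or a finite cyclic group $\Z/n\Z$. So the only thing left to do is to show that the integer $n$ must divide $p$ or $q$.

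First I would unwind what it means for $K$ to be oriented $G$-invariant with $G = \Z/n\Z$. By the classification of subgroups of $\Orth$ invoked in Proposition~\ref{prop:symknots}, the generator $g$ of $G$ is a rotation by $2\pi/n$ about some fixed axis $F$, and — again by that proposition — $K$ is disjoint from $F$. Now $g$ is precisely a periodic map $f\co(\R^3,K)\to(\R^3,K)$ which is a $2\pi/n$ rotation about the line $F$ with $F\cap K=\emptyset$; in other words, $g$ exhibits $K$ as a periodic knot of period $n$ in exactly the sense recalled before the lemma.

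Then I would simply quote the classical result (the reference to \cite{Kawauchi:1996uo} already in the text): the periods of the $(p,q)$ torus knot are exactly the divisors of $p$ together with the divisors of $q$. Hence $n$ divides $p$ or $n$ divides $q$, which is the claim. I would also remark that the converse direction — that such symmetric configurations genuinely exist — is clear, since the standard torus-knot parametrization on the flat torus manifestly carries a $\Z/p\Z$ and a $\Z/q\Z$ rotational symmetry; but this is not needed for the statement as written.

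I do not expect a serious obstacle here: the lemma is essentially the intersection of two results that are either proved (Proposition~\ref{prop:symknots}) or cited as classical (the period classification) in the surrounding text. The only mild subtlety is to check carefully that "oriented $G$-invariant" forces the ambient map $g$ to be genuinely a rotation (not a rotation composed with a reflection or an orientation-reversing piece) so that it matches the definition of "periodic knot of period $n$"; this is handled by the oriented case of Proposition~\ref{prop:symknots}, which lands $G$ inside the cyclic rotation groups rather than the dihedral ones, together with the observation that a finite cyclic subgroup of $\Orth$ fixing an axis consists of honest rotations about that axis.
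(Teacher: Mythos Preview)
Your proposal is correct and follows essentially the same route as the paper's proof: invoke Proposition~\ref{prop:symknots} to reduce to a cyclic rotation group about an axis disjoint from $K$, observe that this makes $n$ a period of $K$, and then apply the cited classical fact that the periods of a $(p,q)$ torus knot are exactly the divisors of $p$ and of $q$. Your version is more expansive (and the final parenthetical observation that a finite cyclic subgroup of $\Orth$ fixing an axis must consist of honest rotations is not literally true---rotoreflections give counterexamples---but Proposition~\ref{prop:symknots} already supplies the needed conclusion, so this does not affect the argument).
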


\begin{proof}
Since $K$ is a nontrivial knot which is oriented $G$-invariant, from Proposition~\ref{prop:symknots} we know that $G$ is trivial or a finite cyclic group given by rotations around a fixed axis which is disjoint from $K$. Thus $n$ is a period of $K$ and hence $n$ divides either $p$ or $q$ (it cannot divide both, since $\gcd(p,q) = 1$.)
\end{proof}

We can then conclude that 
\begin{theorem}
If $[K]$ is the knot type of a non-trivial $(p,q)$-torus knot, then $[K]$ contains at least two non-congruent ropelength critical configurations; one with $p$-fold and one with $q$-fold rotational symmetry.
\label{thm:torusknots}
\end{theorem}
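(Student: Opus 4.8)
The strategy is to exhibit two \emph{a priori} distinct symmetric link types inside the topological knot type $[K]$ and invoke \thm{master theorem} to produce a strongly critical configuration in each. Concretely, $K$ is a $(p,q)$-torus knot, so it is a periodic knot of period $p$ and also of period $q$; by the classical period classification it carries a $\Z/p\Z$-rotational symmetry and, separately, a $\Z/q\Z$-rotational symmetry, each realized by rotation about an axis disjoint from $K$. Orient $K$ compatibly with these rotations (which, being rotations in $\so$, preserve orientation, so $K$ is \emph{oriented} $G$-invariant for $G = \Z/p\Z$ and for $G = \Z/q\Z$). Thus both symmetric link types $[K]_{\Z/p\Z}$ and $[K]_{\Z/q\Z}$ are nonempty, and \thm{master theorem} hands us strongly critical curves $L_p \in [K]_{\Z/p\Z}$ and $L_q \in [K]_{\Z/q\Z}$. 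Each strongly critical curve is in particular ropelength-critical in $[K]$ (strong criticality is the criticality notion of \cite{Cantarella:2011vq}), so $[K]$ contains ropelength-critical configurations with $p$-fold and with $q$-fold rotational symmetry.

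\textbf{Why $L_p$ and $L_q$ are non-congruent.} The remaining point — and the only real content beyond bookkeeping — is that $L_p$ and $L_q$ are not congruent, i.e.\ not related by an element of $\Isom$. Suppose $p < q$ (the case $p = q$ is impossible since $\gcd(p,q)=1$ forces $\{p,q\} = \{1,q\}$, and $p=1$ gives the unknot, excluded by nontriviality; so in fact $2 \le p < q$). The curve $L_q$ is invariant under a cyclic group of rotations of order $q$. I claim $L_p$ is \emph{not}: by \lem{torusknots}, any cyclic group under which the oriented knot $L_p$ (being a nontrivial $(p,q)$-torus knot) is invariant has order dividing $p$ or $q$; if $L_p$ were invariant under a $\Z/q\Z$ that would be allowed, so this crude count is not quite enough. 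Instead I would argue via uniqueness of the symmetry axis and order: a $(p,q)$-torus knot admits an effective rotational action of order $n$ only for $n \mid p$ or $n \mid q$, and for the generic such knot the full rotational symmetry group is exactly $\Z/p\Z$ or exactly $\Z/q\Z$ — but we do not get to choose $L_p$, so what we actually need is that \emph{having} a $q$-fold symmetry forces the knot's symmetry type; since $q \nmid p$ and $q \mid q$, a knot with exact symmetry group $\Z/p\Z$ cannot be congruent to one with a $\Z/q\Z$ symmetry. The clean way to phrase this: if $L_p$ were congruent to $L_q$ via $g \in \Isom$, then $L_p$ would be invariant under the conjugate group $g^{-1}(\Z/q\Z)g$, a cyclic rotation group of order $q$; then $L_p$ is periodic of period $q$, which is fine, but \emph{also} periodic of period $p$, and by the period classification the periods are exactly the divisors of $p$ and of $q$ — consistent. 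So the honest obstruction-free route is: choose the axes so that $L_p$'s construction makes its rotation axis link $K$ with the ``$p$-side'' framing and $L_q$'s with the ``$q$-side'', and observe the linking number / winding data of the knot about its symmetry axis is $q$ in one case and $p$ in the other, a congruence invariant. I expect this — pinning down a congruence invariant distinguishing the two — to be the main obstacle; everything else is a direct application of \thm{master theorem}.

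\textbf{Executing the invariant argument.} To make the distinguishing invariant precise, recall that a torus knot sits on a standard torus as a $(p,q)$-curve, and the period-$p$ symmetry is rotation about the core of the complementary solid torus on one side while the period-$q$ symmetry is rotation about the core on the other side. The winding number of $K$ around the first axis is $q$; around the second it is $p$. These winding numbers are the rotation numbers of the induced $\sot$-action obtained (as in the proof sketch of \prop{symknots}) by pushing $G$ into $\sot$ via the arclength parametrization, and they are invariant under $\Isom$: if $g L_p = L_q$ then $g$ carries the symmetry axis of $L_p$ to an axis of symmetry of $L_q$ with the same winding number, so $q \in \{$winding numbers of $L_q$'s axes$\} = \{p\}$ (up to the relevant multiplicities), forcing $p = q$, a contradiction. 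Hence $L_p \not\cong L_q$, and $[K]$ contains at least two non-congruent ropelength-critical configurations, one with $p$-fold and one with $q$-fold rotational symmetry, as claimed. (If $p$ or $q$ equals $1$, the corresponding statement is vacuous/trivial, and the theorem is understood to assert the existence of the $\max(p,q)$-fold symmetric critical configuration, which still differs from any ropelength minimum by the same winding-number argument whenever the minimum is not itself $q$-fold symmetric.)
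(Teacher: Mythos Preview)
Your setup is correct and matches the paper: exhibit $\Z/p\Z$- and $\Z/q\Z$-symmetric representatives, invoke \thm{master theorem} to get strongly critical $L_p$ and $L_q$, then argue non-congruence. The gap is in the last step, and it is a genuine one --- but only because you abandoned the right argument one sentence too early.

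You correctly observe that if $gL_p = L_q$ then $L_p$ is invariant under both its original $\Z/p\Z$ and the conjugate $g^{-1}(\Z/q\Z)g$, so $L_p$ has periods $p$ and $q$. You then declare this ``consistent'' with the period classification and go looking for a different invariant. But it is \emph{not} consistent once you remember \prop{symknots}: the full oriented symmetry group $G$ of the nontrivial knot $L_p$ is finite \emph{cyclic}. A cyclic group containing subgroups of orders $p$ and $q$ with $\gcd(p,q)=1$ has order divisible by $pq$. By \lem{torusknots}, $|G|$ divides $p$ or $q$; since $p,q\ge 2$, $pq$ divides neither, and you have your contradiction. That is the paper's argument, and you had all the pieces in hand.

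Your winding-number alternative is not wrong in spirit, but as written it is incomplete. The assertion that the set of winding numbers of $L_q$'s axes is exactly $\{p\}$ presupposes that $L_q$ has no symmetry axis beyond the $\Z/q\Z$ one --- which is precisely what is at issue. And if you try to close that gap, you end up showing that $L_q$ acquires a $\Z/p\Z$ symmetry in addition to its $\Z/q\Z$ one, which lands you back in the cyclic-group argument above. So the winding-number route is not an independent proof; it is a detour back to the one you already had.
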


\begin{proof}
It is easy to construct $p$-fold and $q$-fold (oriented) rotationally symmetric representatives of $K$, so the symmetric knot types $[K]_{\Z/p\Z}$ and $[K]_{\Z/q\Z}$ are nonempty. By Theorem~\ref{thm:master theorem}, each contains a strongly critical configuration in $[K]$. We must prove that these configurations are not congruent. Suppose they are congruent, and without loss of generality that they are the same curve $K_0$. Now $K_0$ has an oriented symmetry group $G$ containing $\Z/p\Z$ and $\Z/q\Z$ as subgroups. By Proposition~\ref{prop:symknots}, $G$ is finite cyclic. Hence $G$ has order at least $pq$, since $\gcd(p,q) = 1$.  But $K_0$ is a $(p,q)$ torus knot, so Lemma~\ref{lem:torusknots} tells us that the order of $G$ divides $p$ or $q$. This contradiction proves that these curves are not congruent.
\end{proof}	

For the trefoil knot (the $(2,3)$-torus knot), the ropelength-minimal configuration seems to be $3$-fold symmetric, and hence one of the two configurations predicted by our Theorem. But in general, this is not true for torus knots: as we see in Figure~\ref{fig:52} for the $(2,5)$-torus knot, the minimal length configuration seems to break both symmetries. When $p$ and $q$ are not prime, there are also configurations with any symmetry dividing $p$ or $q$. If $a$ divides $q$, we cannot guarantee that the critical $a$-fold symmetric configuration is different from the $q$-fold symmetric configuration (which is also $a$-fold symmetric), but this is often the case. For example, we see in Figure~\ref{fig:29torus} configurations of the $(2,9)$ torus knot with $2$-fold, $3$-fold, and $9$-fold symmetry, as well as the (apparent) minimizer, tightened with no symmetry restrictions and breaking all three symmetries.
	
\begin{figure}[ht]

\begin{tabular}{ccc} 	
\includegraphics[height=1.1in]{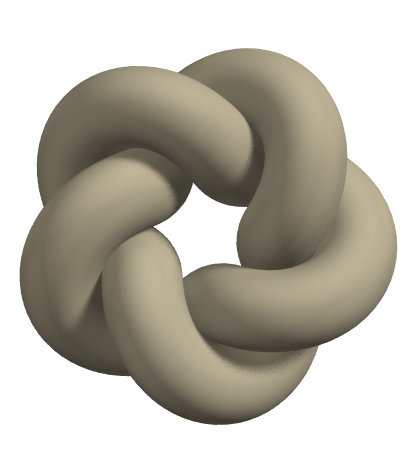} & 
\includegraphics[height=1.1in]{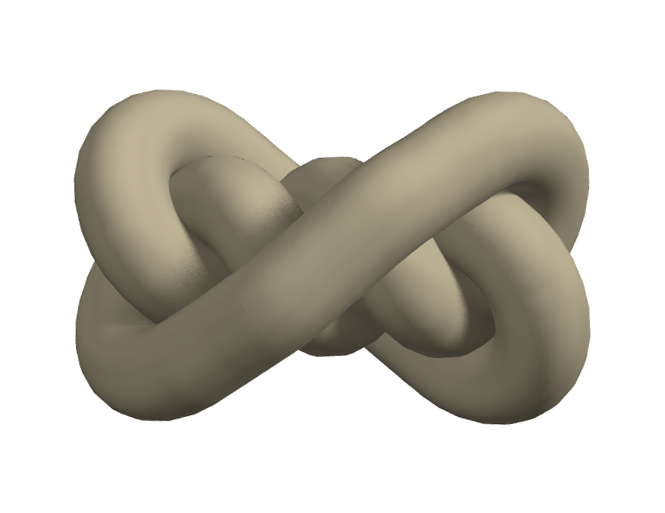} & 
\includegraphics[height=1.1in]{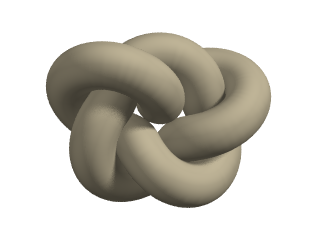} \\
$\Rop(L) \simeq 48.23$ &
$\Rop(L) \simeq 62.56$ &
$\Rop(L) \simeq 47.21$ \\
$\Z/5\Z$ & $\Z/2\Z$ & None \\
\end{tabular}

\caption{These figures show 5-fold and 2-fold symmetric configurations of the $(2,5)$-torus knot, as well as the overall ropelength minimizer, which appears to break both symmetries. The ropelengths for all three configurations are quite different.
\label{fig:52}}
\end{figure}	

\begin{figure}[ht]
\begin{tabular}{cccc} 	
 	\includegraphics[height=1.1in]{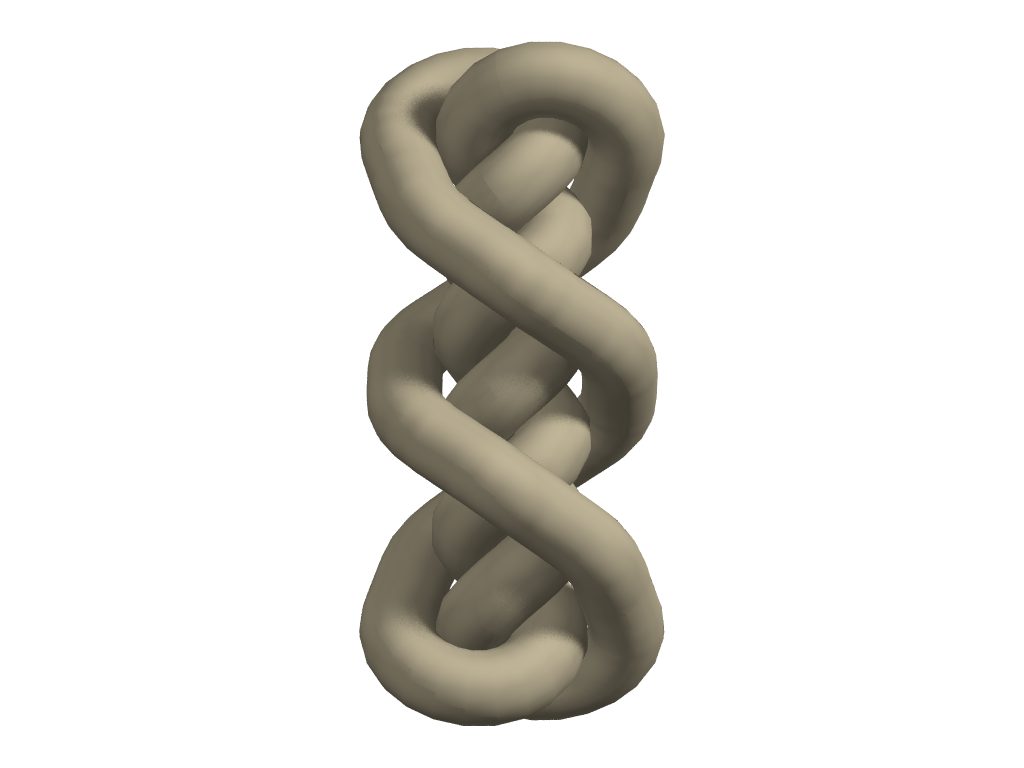} & 
	\includegraphics[height=1.1in]{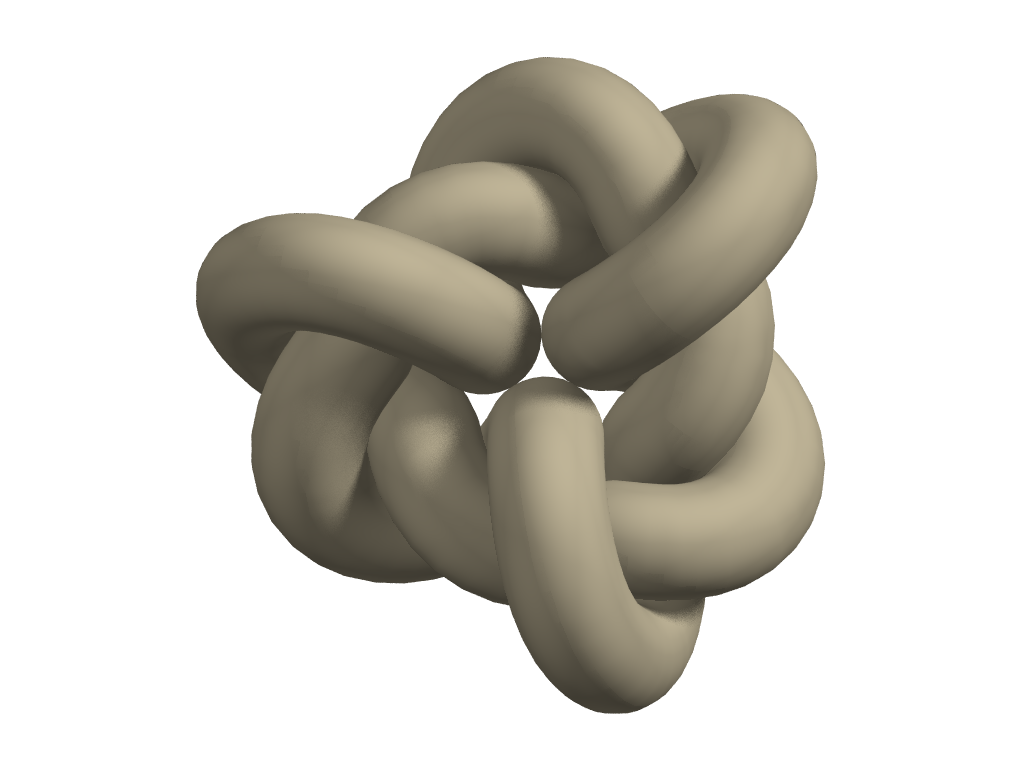} &
	\includegraphics[height=1.1in]{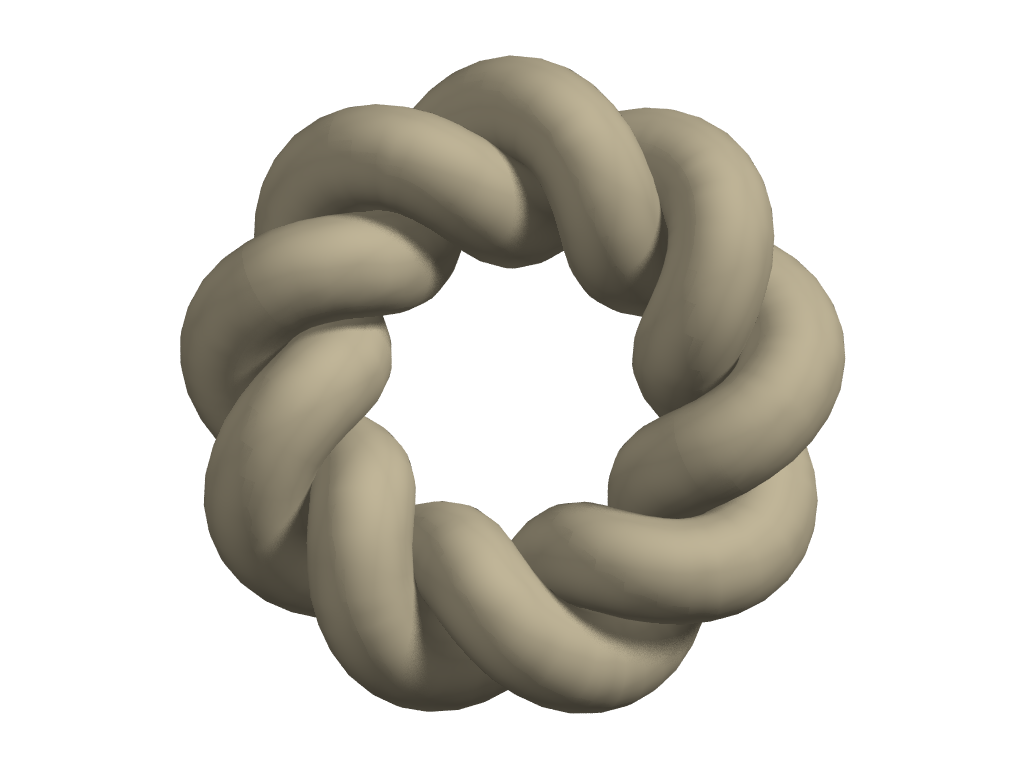} &
	\includegraphics[height=1.1in]{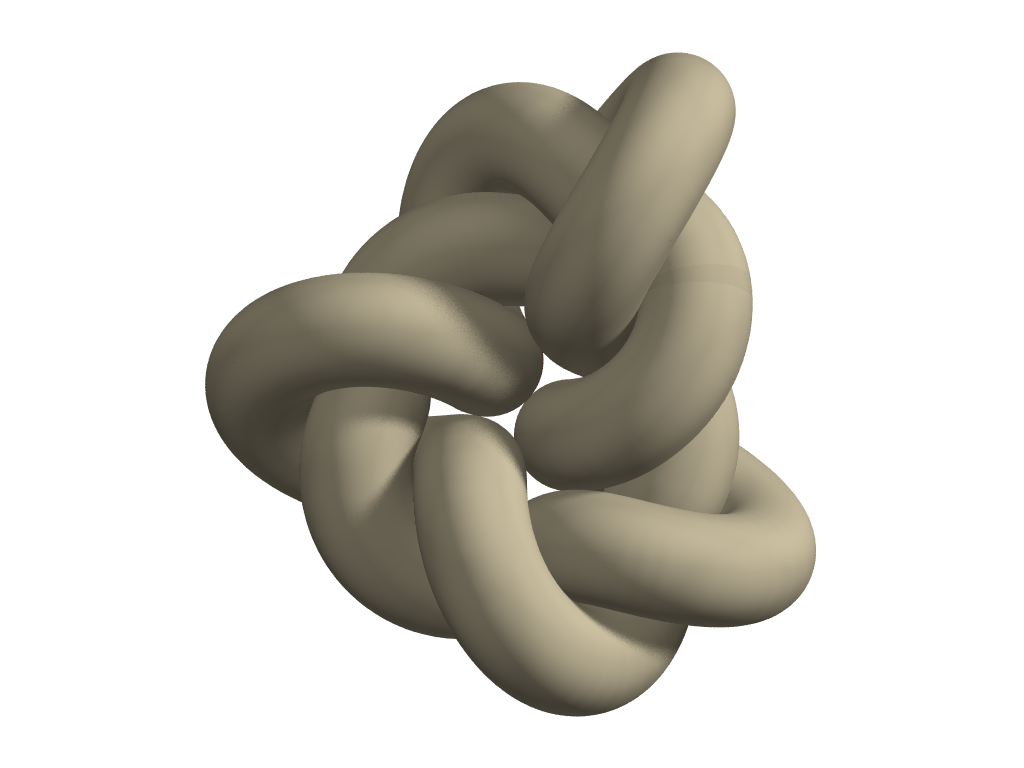} \\
	$\Rop(L) \simeq 114.23$ &	
	$\Rop(L) \simeq 75.91$ &		
	$\Rop(L) \simeq 80.68$ &
	$\Rop(L) \simeq 75.69$ \\
	$\Z/2\Z$ & $\Z/3\Z$ & $\Z/9\Z$ & None \\
\end{tabular}
\caption{Several critical configurations of the $(2,9)$ torus knot computed by \texttt{ridgerunner}\cite{Ashton:2011du}, with ropelengths and symmetry groups. We can see that the minimal length $3$-fold symmetric configuration is considerably different (and notably shorter) than the $9$-fold and $2$-fold symmetric minimizers. The overall minimizer (at least, according to this calculation) seems to break all of the symmetries of the torus knot, though it is only slightly shorter than the $3$-fold symmetric configuration which it resembles.
\label{fig:29torus}}
\end{figure} 

\subsection{Mirror Symmetric Knots and Links}

A version of the ropelength can be constructed defining a ``thick'' isotopy of a link to be an isotopy through curves of thickness at least one and a ``physical'' isotopy to be a thick isotopy which preserves the length of each component. In 2012, Coward and Hass \cite{Coward:2012wh} considered the case of a link constructed from composing two nontrivial knots so that the composite knot met the $x-y$ plane perpendicularly in exactly two places and adding a unknotted component around these intersections (see Figure~\ref{fig:splitlinks}). If the unknotted component wraps tightly around the two strands of the knot puncturing the plane, it has length $4\pi + 4$. Coward and Hass proved that during any thick isotopy which splits the link, the length of the unknotted component is at some point equal to $4 \pi + 6$, and hence that there is no physical isotopy which splits the link. 

We note that this theorem is almost, but not quite the same as proving that such a configuration admits a local minimum for ropelength in our sense. For us, the ropelength of a link is the sum of the lengths of the two components of the link divided by the thickness of the entire link. The ropelength of the knotted component of the link might be so much longer than its minimum ropelength in the configuration where the unknotted component has ropelength $4\pi + 4$ that the sum of the ropelengths of the two components manages to decrease during a splitting isotopy even as the length of the unknotted component increases.

It would be interesting to use the Coward/Hass methods to try to prove the existence of a ropelength local minimizer for these links. Our methods yield a weaker result; the existence of non-congruent ropelength critical configurations. First, it is easy to see that the connect sum of a knot and its mirror image has a mirror-symmetric configuration.

\begin{proposition}
For any knot or link $K$ the connect sum of $K$ and its mirror image $K \# K^m$ has a ropelength-critical configuration which is mirror-symmetric.
\label{prop:mirror}
\end{proposition}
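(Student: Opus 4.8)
The plan is to apply the master theorem (Theorem~\ref{thm:master theorem}) with $G=\Z/2\Z$ realized as $\{\operatorname{Id},\rho\}\subset\Orth$, where $\rho$ is the reflection of $\R^3$ across a plane $P$. This is a finite, hence compact, subgroup of $\Orth$, so the entire apparatus of Section~3 applies. It therefore suffices to exhibit a single mirror-symmetric (that is, $G$-invariant) embedded representative of $K\#K^m$ of positive thickness, so that the symmetric link type $[K\#K^m]_G$ is nonempty; Theorem~\ref{thm:master theorem} then hands us a strongly critical curve in $[K\#K^m]_G$, which is automatically $\rho$-invariant, lies in the topological type $K\#K^m$, and is ropelength-critical (strong criticality implies ordinary criticality by the linearity of $\delta_\xi\Len$ and the homogeneity part of Corollary~\ref{cor:superlinear}: rescale any length-decreasing $\xi$ so that $\delta_\xi\Len(L)=-1$).

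The one thing to construct is the symmetric representative. I would position a representative of $K$ so that a small trivial subarc of (a chosen component of) $K$ dips below $P=\{z=0\}$, meeting $P$ perpendicularly in exactly two points $p_1,p_2$, with all the rest of $K$ lying strictly above $P$; we may further arrange that the curve is a vertical segment near each of $p_1,p_2$. Let $\alpha:=K\cap\{z\ge 0\}$, a tangle with $\partial\alpha=\{p_1,p_2\}$ whose closure by a trivial arc below $P$ is again $K$, and set $L_0:=\alpha\cup\rho(\alpha)$. Since $\rho$ fixes $p_1$ and $p_2$ and $\alpha$ meets $P$ only there, $L_0$ is a closed curve; because $\alpha$ is vertical near $p_1,p_2$ and $\rho$ preserves vertical lines, $L_0$ is smooth (in particular $C^{1,1}$) at the two junctions and is embedded, hence has positive reach. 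Plainly $\rho(L_0)=\rho(\alpha)\cup\alpha=L_0$, so $L_0$ is $G$-invariant. By the standard half-space model of the connect sum --- the union of a tangle representing a knot in a half-space, meeting the bounding plane perpendicularly in two points with complementary arc trivial, and a tangle representing a second knot in the opposite half-space, represents their connect sum --- and since $\rho(\alpha)$ is such a tangle for $\rho(K)=K^m$, the curve $L_0$ represents $K\#K^m$. After rescaling $L_0$ to have thickness at least one, this shows $[K\#K^m]_G\neq\emptyset$.

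Applying Theorem~\ref{thm:master theorem} to $[K\#K^m]_G$ produces a strongly critical curve $L$ in it. Then $L$ is $G$-invariant, i.e.\ invariant under the reflection $\rho$, hence mirror-symmetric; it lies in the knot or link type $K\#K^m$; and, being strongly critical, it is ropelength-critical. This is the desired configuration.

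I do not expect a genuine obstacle: the result is a direct application of the master theorem, and the only points requiring care are bookkeeping. The first is checking that $L_0$ is a legitimate object for the theory --- embedded, $C^{1,1}$, with $\Thi>0$ --- which is exactly why I impose the perpendicular, indeed locally vertical, meeting with $P$. The second, and the one most worth flagging explicitly, is that $\rho$ reverses orientation along $L_0$ at the fixed points $p_1,p_2$, so the symmetry is \emph{unoriented}; one should note that Proposition~\ref{prop:invariance} through Theorem~\ref{thm:master theorem} are stated for $G$-invariant (not oriented-$G$-invariant) curves and use only the compactness of $G$ and the inclusion $G\subset\Orth$, so they apply verbatim. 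The third, purely cosmetic, point is the identification of the knot/link type of $L_0$ with $K\#K^m$ via the standard half-space model of the connect sum.
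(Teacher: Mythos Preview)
Your proposal is correct and follows exactly the paper's approach: construct a mirror-symmetric representative of $K\#K^m$ by reflecting a half of $K$ across a plane and gluing, then invoke Theorem~\ref{thm:master theorem}. The paper's proof is a single sentence to this effect; you have supplied the bookkeeping details (perpendicular intersection with $P$ for $C^{1,1}$-smoothness, the half-space model of connect sum, and the observation that the relevant theorems require only unoriented $G$-invariance) that the paper leaves implicit.
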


\begin{proof}
It is easy to construct a mirror-symmetric representative of the link type $[K \# K^m]$ by reflecting a copy of $K$ over the plane and then joining the two components in a symmetric manner. The result then follows directly from Theorem~\ref{thm:master theorem}.
\end{proof}

We can construct examples similar to those of Coward and Hass from any mirror-symmetric knot, as shown in Figure~\ref{fig:splitlinks}.

\begin{figure}[ht]
\begin{tabular}{c@{\hspace{0.75in}}c}
\includegraphics[height=1.5in]{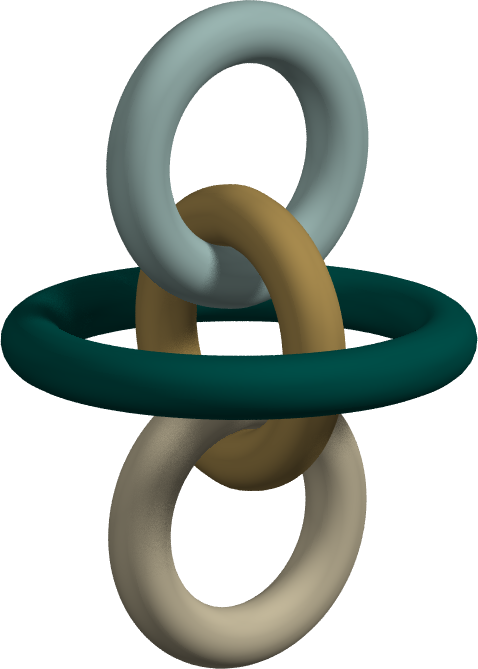} 
\includegraphics[height=1.5in]{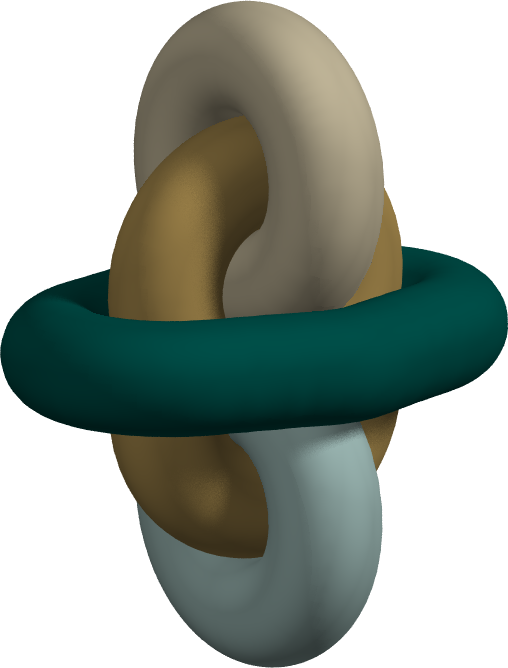} &
\includegraphics[height=1.5in]{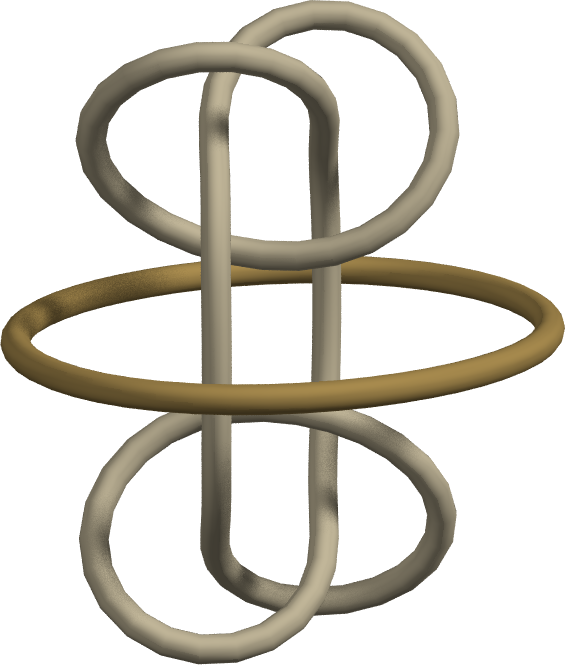}  
\includegraphics[height=1.5in]{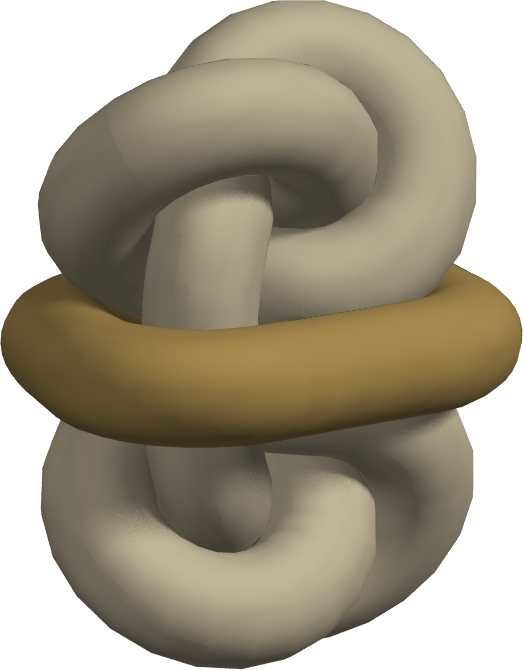} \\
$K_1 = 2^2_1 \# (2^2_1)^m$ (Three Link Chain) & $K_1 = 3_1 \# 3_1^m$ (Square Knot) 
\end{tabular}
\caption{This picture shows mirror-symmetric ropelength critical configurations which illustrate Proposition~\ref{prop:gordian}. On the left, we see the connect sum of a Hopf link and its mirror image forming a mirror-symmetric three link chain, together with a split component in the plane of symmetry. The second image shows an approximately tight configuration with this symmetry computed by \texttt{ridgerunner}\cite{Ashton:2011du}. On the right, we see the connect sum of a trefoil knot and its mirror image forming a mirror-symmetry square knot, together with a split component in the plane of symmetry. Coward and Hass~\cite{Coward:2012wh} show that the unknotted component cannot be split from the remainder of the link by an isotopy of curves with thickness $\geq 1$ without increasing its length.
\label{fig:splitlinks}}
\end{figure}

\begin{proposition}
Suppose that $[K]_{\Z/2\Z}$ is a symmetric link type where $\Z/2\Z$ acts by reflection over a plane. Let $[L]$ be the (topological) link type $K \sqcup O$, where $O$ is an unknot split from $K$. There is a symmetric link type $[L]_{\Z/2\Z} \subset [L]$ which contains a ropelength critical configuration which is not a global ropelength minimizer in $[L]$.
\label{prop:gordian}
\end{proposition}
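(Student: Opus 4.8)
The plan has two parts: apply the machine of \thm{master theorem} to a well-chosen symmetric link type inside $[L]$ to produce a ropelength-critical $L_0$, and then show that $[L]_{\Z/2\Z}$ contains no geometrically split configuration while every global ropelength minimizer of the split link $[L]$ \emph{is} geometrically split --- so $\Rop(L_0)=\Rop([L])$ is impossible and $L_0$ is not a global minimizer.

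For the construction, fix a $\Z/2\Z$-symmetric representative of $[K]_{\Z/2\Z}$, isotope it equivariantly so that it meets the mirror plane $P$ transversally, and --- as in Figure~\ref{fig:splitlinks}, following Coward and Hass --- let $O$ be a small round circle lying in $P$ and encircling exactly two of the crossing points. Then $K\sqcup O$ is split, so this $\Z/2\Z$-invariant configuration $L$ represents $[L]$, and its symmetric isotopy class $[L]_{\Z/2\Z}$ is a nonempty symmetric link type contained in $[L]$. By \thm{master theorem} it contains a strongly critical --- in particular ropelength-critical --- configuration $L_0$.

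Next I pin down the global minimum. Since $[L]$ is split, $\Rop([L])=\Rop([K])+\Rop(\mathrm{unknot})$, attained by placing a tight representative of $[K]$ and a round circle far apart. If $L'=K'\sqcup O'$ is any minimizer, rescale so $\Thi(L')=1$; then $\len(K')\ge\Rop([K])$ and $\len(O')\ge\Rop(\mathrm{unknot})$ force both equalities, so $O'$ is ropelength-minimal among unknots, i.e.\ a round circle. A perpendicular local-minimum chord between $K'$ and $O'$ is a strut of $L'$, so by Definition~\ref{def:struts} the two components are separated by at least $\Thi(L')$; a short check then shows $K'$ must miss the flat disk $O'$ bounds, so the boundary of a thin regular neighborhood of that disk separates $O'$ from $K'$ and $L'$ is geometrically split. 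Since $L_0\in[L]$ we have $\Rop(L_0)\ge\Rop([L])$, and equality would force $L_0$ to be a global minimizer, hence geometrically split; so it suffices to rule that out.

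Finally, I claim no configuration in $[L]_{\Z/2\Z}$ is geometrically split. Suppose $L''\in[L]_{\Z/2\Z}$ had a separating sphere. Passing to the quotient of $\R^3$ by the reflection $\rho$ --- a closed half-space $H$ with mirror boundary $P$ --- a symmetry-preserving isotopy of $L''$ descends to an isotopy in $(H,P)$ of the pair consisting of a properly embedded arc (or arcs) $\bar K$ with feet on $P$ and a circle in $P$ encircling two of those feet; promoting the separating sphere to a $\rho$-invariant one via an equivariant sphere theorem, the geometric splitting of $L''$ descends to a splitting of this pair rel $P$. So it suffices to show that in the model configuration $O$ does not bound a disk in $H\setminus\bar K$, equivalently that $O$ cannot be detached from $K$ by a symmetry-preserving isotopy. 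I expect this purely three-dimensional topological fact to be the main obstacle: it is visually evident from Figure~\ref{fig:splitlinks}, but a rigorous argument requires tracking a product of two meridian loops through $\pi_1(H\setminus\bar K)$, and it is exactly here that the knottedness of $K$ is used --- for a trivial $K$ the loop $O$ \emph{can} be detached symmetrically and the statement fails, consistent with the fact that the symmetric unknot forms a single symmetric link type.
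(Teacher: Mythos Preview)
Your construction of $[L]_{\Z/2\Z}$ and invocation of \thm{master theorem} to produce $L_0$ match the paper. The divergence is in showing that $L_0$ is not a global minimizer, and there your route has a genuine error, not merely a gap.

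Your argument hinges on the claim that no configuration in $[L]_{\Z/2\Z}$ is ``geometrically split'', which you take to mean ``admits a separating $2$-sphere''. But $[L]$ is by hypothesis the \emph{split} link type $K \sqcup O$, so \emph{every} configuration in $[L]$, including every $L'' \in [L]_{\Z/2\Z}$, admits an embedded sphere separating $K$ from $O$. Worse, the equivariant sphere theorem you invoke works against you: applied to the $\rho$-action on $S^3 \setminus L''$ (where $\pi_2 \neq 0$ precisely because the link is split), it \emph{produces} a $\rho$-invariant essential sphere, which for a two-component split link separates the components. So your final claim is false as stated, and the $\pi_1$ program in your last paragraph is aimed at a non-obstruction. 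The underlying point is that $L_0$ and any global minimizer lie in the same topological link type and therefore share every invariant of the complement; what distinguishes them must be \emph{metric}. (A smaller issue in the same spirit: your ``short check'' that a tight $K'$ misses the flat disk bounded by a minimal round $O'$ is not short --- the center of that disk sits at distance equal to the radius from $O'$, so the thickness constraint alone does not prevent $K'$ from passing through it.)

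The paper's argument is exactly such a metric one. Since the induced $\Z/2\Z$-action on the component $O$ is trivial in the initial configuration (it fixes $O$ pointwise), it remains trivial throughout $[L]_{\Z/2\Z}$, so $O \subset P$ always. The two points of $K \cap P$ that $O$ initially encircles cannot cross $O$ during a symmetric isotopy, and they cannot cancel: a connected mirror-symmetric curve not contained in $P$ meets $P$ in exactly two points, since $\rho|_K$ is an involution of $S^1$ with fixed points and hence a reflection. Thus in $L_0$ the curve $O$ still encircles two strands of $K$, hence two disjoint disks of the tube cross-section. The stadium-curve bound (Theorem~10 of~\cite{Cantarella:2002dx}) then gives the contribution of $O$ to ropelength as at least $4\pi + 4$, so
\[
\Rop(L_0) \ \ge\ \Rop([K]) + 4\pi + 4 \ >\ \Rop([K]) + 2\pi \ =\ \Rop([L]),
\]
and $L_0$ is not a global minimizer. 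No splitting argument is needed.
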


\begin{proof}
Take any $K \in [K]_{\Z/2\Z}$. We can construct a representative of a symmetric link type $[L]_{\Z/2\Z}$ by placing the unknot $O$ in the plane of symmetry so that it encircles the intersections of $K$ with the plane; by Theorem~\ref{thm:master theorem}, $[L]_{\Z/2\Z}$ contains a symmetric, ropelength-critical configuration $L_0$. We must show that this configuration is not the ropelength minimizer in $[L]$. Now in every configuration in our link type $[L]_{\Z/2\Z}$, $O$ encircles the intersections of $K$ with the plane. After all, $O$ must remain planar to avoid breaking the symmetry of the component $O$, and $K$ cannot be made disjoint from the planar disk spanning $O$ without breaking the symmetry of $K$. Thus in our critical configuration $L$, $O$ encircles at least two intersections of $K$ with the symmetry plane, and hence the ropelength of $O$ is at least $4\pi + 4$ as $O$ encircles at least two disjoint unit disks in the plane (cf. Theorem~10 of~\cite{Cantarella:2002dx}). The ropelength of $K$ in this configuration is at least as large as the minimum ropelength $\Rop([K])$ of knot type $[K]$, and so $\Rop(L_0) \geq \Rop([K]) + 4\pi + 4$. However, the minimum ropelength of the topological link type $[L]$ is given by
\begin{equation*}
\Rop([L]) = \Rop([K]) + \Rop([O]) = \Rop([K]) + 2\pi,
\end{equation*}
since there is a split configuration in $[L]$ where $K$ and the unknot $O$ each achieve the minimum ropelength for their knot types.
\end{proof}
This construction gives us the first explicit examples of ropelength-critical configurations of links which are known not to be ropelength-minimal. It would be interesting to do this for knots.

\subsection{Sakuma's Examples}

When we examined the torus knots above, we considered the free periods of a knot. A related notion is the topological symmetry group of a knot or link $\Sym(L) = \MCG(S^3,L)$, where $\MCG(S^3,L)$ is the mapping class group of the pair $(S^3,L)$. A $G$-invariant configuration of $L$ induces an inclusion homomorphism $\iota \co G \subset \Isom \rightarrow \Sym(L)$. Further, any $G$-invariant isotopy of this configuration induces the same homomorphism $\iota$. Following Sakuma~\cite{MR1006701}, we can sometimes combine this construction with the group structure of $\Sym(L)$ to prove the existence of distinct configurations of $L$ with the \emph{same} type of symmetry! This is like the existence of different ``symmetric union diagrams'' of knots exhibited in~\cite{Eisermann:2007tq}, though those diagrams do not correspond to rigid symmetries of any space curve.

\begin{proposition}
Suppose that there exist configurations $L_1$ and $L_2$ in a nontrivial knot type $[L]$ which are $G_1$ and $G_2$ symmetric, and that the smallest subgroup of $\Sym(L)$ containing $\iota(G_1)$ and $\iota(G_2)$ is infinite. Then there exist distinct ropelength-critical configurations in $[L]$ which are $G_1$ and $G_2$ invariant (even if the groups $G_1$ and $G_2$ are isomorphic!).
\label{prop:sakuma style}
\end{proposition}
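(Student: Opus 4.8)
The plan is to follow the template of the proof of Theorem~\ref{thm:torusknots}. Since the configurations $L_1$ and $L_2$ witness that the symmetric link types $[L]_{G_1}$ and $[L]_{G_2}$ are nonempty, Theorem~\ref{thm:master theorem} produces strongly critical configurations $M_1 \in [L]_{G_1}$ and $M_2 \in [L]_{G_2}$. These are exactly the two ropelength-critical configurations we want (one $G_1$-invariant, one $G_2$-invariant), so the entire content of the proposition is to show that $M_1$ and $M_2$ cannot be congruent.

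First I would record that $M_i$ induces the same inclusion homomorphism $\iota\colon G_i \to \Sym(L)$ as $L_i$: both lie in the symmetric link type $[L]_{G_i}$, hence are joined by a $G_i$-invariant isotopy, and (as noted just before the proposition) a $G_i$-invariant isotopy does not change the induced homomorphism. Then I would argue by contradiction: suppose $M_1$ and $M_2$ are congruent, so after applying an isometry we may assume $M_2 = gM_1$ for some $g \in \Isom$. The geometric symmetry group $\Gamma \subset \Isom$ of $M_2$ then contains $G_2$ (since $M_2$ is $G_2$-invariant) and also $gG_1g^{-1}$ (since $M_1$ is $G_1$-invariant). But $\Gamma$, being the symmetry group of a bounded curve, is compact and hence conjugate into $\Orth$; since $[L]$ is nontrivial, Proposition~\ref{prop:symknots} forces $\Gamma$ to be finite. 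Consequently the induced homomorphism $\iota_\Gamma\colon \Gamma \subset \Isom \to \Sym(L)$ has finite image, and this image contains $\iota(G_2)$ together with a conjugate of $\iota(G_1)$ in $\Sym(L)$. So $\iota(G_1)$ and $\iota(G_2)$ lie, up to conjugacy, in a common finite subgroup of $\Sym(L)$, contradicting the hypothesis that the subgroup they generate is infinite. The parenthetical ``even if $G_1 \cong G_2$'' requires nothing extra: the argument never uses the abstract structure of the symmetry groups, only that $\langle \iota(G_1),\iota(G_2)\rangle$ is infinite.

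The step I expect to require the most care is the mapping-class-group bookkeeping at the end. The homomorphism $\iota$ attached to a configuration becomes a genuine map into the fixed group $\Sym(L) = \MCG(S^3,L)$ only after one fixes an identification of the mapping class group of the pair $(S^3,\text{configuration})$ with $\Sym(L)$, and the congruence isometry $g$ relates these identifications for $M_1$ and $M_2$ only up to an inner automorphism of $\Sym(L)$. Thus what the argument literally delivers is that \emph{some} conjugate of $\iota(G_1)$, together with $\iota(G_2)$, generates a finite subgroup; to close the proof one must read the hypothesis as being robust under this conjugation ambiguity --- equivalently, that no conjugates of $\iota(G_1)$ and $\iota(G_2)$ lie together in a finite subgroup of $\Sym(L)$. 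This is the natural reading of the hypothesis (and is automatic whenever $\Sym(L)$ has an essentially unique maximal finite subgroup up to conjugacy), and checking it in the concrete examples to follow is the only genuine work that remains.
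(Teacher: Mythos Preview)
Your overall strategy---produce critical configurations $M_i$ via Theorem~\ref{thm:master theorem}, then derive a contradiction from a common symmetry group using Proposition~\ref{prop:symknots}---is exactly the paper's. The difference is in what you try to rule out. The paper does not attempt non-congruence; it shows that the connected components $C_i = [L_i]_{G_i}$ of $G_i$-invariant configurations are disjoint, so that $M_1$ and $M_2$ are literally different curves. If $L^* \in C_1 \cap C_2$, then $L^*$ is genuinely $G_1$- and $G_2$-invariant (same subgroups of $\Isom$, no conjugation), so its full symmetry group $H$ contains both. Moreover, since $L^*$ is joined to $L_i$ by a $G_i$-invariant isotopy, the induced map $\iota_H \colon H \to \Sym(L)$ restricts on $G_i$ to \emph{exactly} $\iota(G_i)$. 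Hence $\iota_H(H)$ contains both $\iota(G_1)$ and $\iota(G_2)$ on the nose, and therefore the infinite subgroup they generate, contradicting finiteness of $H$.

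By aiming instead for non-congruence you introduce the isometry $g$, and with it the conjugacy ambiguity you correctly flag: your argument only yields that $\iota(G_2)$ and some $\Sym(L)$-conjugate of $\iota(G_1)$ sit in a common finite subgroup, which is not what the hypothesis forbids. That is a real gap for the stronger conclusion, not a technicality, and your proposed fix (reinterpret the hypothesis up to conjugacy) changes the statement. If you drop the congruence ambition and simply assume $M_1 = M_2$, your conjugacy paragraph evaporates and your proof collapses to the paper's. So: your proof is correct for ``distinct'' once you set $g = \mathrm{id}$; the extra generality you attempt is not established (and is not claimed in the paper).
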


\begin{proof}
Let $[C_i]$ be the connected component of $G_i$-invariant representatives of $[L]$ containing $L_i$. We want to prove that $C_1$ and $C_2$ are disjoint. If not, there is a configuration $L^*$ in $[L]$ which is invariant under a symmetry group $H$ containing both $G_1$ and $G_2$ as subgroups which have only the identity element in common. Now $\iota(H)$ contains both $\iota(G_1)$ and $\iota(G_2)$ as subgroups, and hence $\iota(H)$ is infinite by hypothesis. This is a contradiction: since the knot $L^*$ is $H$-invariant and nontrivial, $H$ is finite by Proposition~\ref{prop:symknots}. 
\end{proof}

It then remains to construct knots with the required property. Here we rely again on Sakuma to show:

\begin{proposition}
Let $K$ be an untwisted double of the $6_1$ knot. Then $K$ has at least $4$ distinct $\Z/2\Z$-symmetric ropelength critical configurations.
\label{prop:distinct}
\end{proposition}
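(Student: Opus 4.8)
The plan is to derive this from Proposition~\ref{prop:sakuma style} together with Sakuma's computation of the topological symmetry group $\Sym(K)=\MCG(S^3,K)$ of the untwisted double $K$ of $6_1$. The input I would quote from Sakuma~\cite{MR1006701} is the following: $K$ admits four period-two (2-fold rotational) symmetries $\sigma_1,\dots,\sigma_4$, each realized by an embedded $\Z/2\Z$-symmetric configuration $L_k$ of $K$, so that the images $\alpha_k:=\iota(\sigma_k)\in\Sym(K)$ lie in four distinct conjugacy classes $\mathcal C_1,\dots,\mathcal C_4$, and moreover for $i\neq j$ every pair $\beta_i\in\mathcal C_i$, $\beta_j\in\mathcal C_j$ generates an infinite subgroup of $\Sym(K)$. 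This is exactly the phenomenon Sakuma's examples are built to display, and it is the only nontrivial ingredient.

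Assume this. For each of the $\binom{4}{2}$ pairs $i\neq j$: the curves $L_i$ and $L_j$ are $\Z/2\Z$-symmetric representatives of the nontrivial knot type $[K]$, and the smallest subgroup of $\Sym(K)$ containing $\iota(\langle\sigma_i\rangle)$ and $\iota(\langle\sigma_j\rangle)$ is $\langle\alpha_i,\alpha_j\rangle$, which is infinite. So Proposition~\ref{prop:sakuma style} applies, and its proof shows that the connected components $[C_i]$, $[C_j]$ of $\Z/2\Z$-symmetric representatives of $[K]$ through $L_i$, $L_j$ are disjoint: a curve in $[C_i]\cap[C_j]$ would be a nontrivial knot invariant under $H:=\langle\sigma_i,\sigma_j\rangle\le\Orth$, hence $H$ finite by Proposition~\ref{prop:symknots}, while $\iota(H)\supseteq\langle\alpha_i,\alpha_j\rangle$ is infinite --- a contradiction. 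Thus $[C_1],\dots,[C_4]$ are pairwise disjoint (nonempty) symmetric link types contained in $[K]$.

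By Theorem~\ref{thm:master theorem}, each $[C_k]$ contains a strongly critical --- hence ropelength-critical --- configuration $M_k$, which is $\Z/2\Z$-symmetric and lies in $[K]$. Disjointness of the $[C_k]$ makes the $M_k$ distinct, and they are pairwise non-congruent: if $M_j=h(M_i)$ for an isometry $h$, then the single curve $M_i$ is invariant under both $\langle\sigma_i\rangle$ and $\langle h^{-1}\sigma_j h\rangle$, hence under a finite subgroup $H'\le\Orth$ (Proposition~\ref{prop:symknots}); tracking the identifications $\MCG(S^3,M_i)\cong\MCG(S^3,M_j)\cong\Sym(K)$ shows that $\iota(H')$ contains $\alpha_i$ together with an inner conjugate of $\alpha_j$, i.e.\ an element of $\mathcal C_j$, so $\iota(H')$ would be infinite by the hypothesis on Sakuma's classes --- impossible, since $\iota(H')$ is a quotient of the finite group $H'$. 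Hence $K$ has at least four non-congruent $\Z/2\Z$-symmetric ropelength-critical configurations.

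The only real obstacle is the group-theoretic fact quoted at the start: that the untwisted double of $6_1$ has four conjugacy classes of geometrically realizable period-two symmetries, any two of which (across distinct classes) generate an infinite subgroup of $\MCG(S^3,K)$. That is precisely Sakuma's theorem~\cite{MR1006701}, which I would invoke directly; everything downstream is a formal application of Proposition~\ref{prop:sakuma style} and Theorem~\ref{thm:master theorem}, just as in the torus-knot argument of Theorem~\ref{thm:torusknots}.
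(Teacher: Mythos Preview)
Your argument is essentially the paper's own: invoke Sakuma's Proposition~2.7 to obtain four $\Z/2\Z$ subgroups of $\Sym(K)$ any two of which generate an infinite group, exhibit geometric representatives (the paper does this via a figure), and then apply Proposition~\ref{prop:sakuma style} pairwise together with Theorem~\ref{thm:master theorem}. You go somewhat further than the paper's short proof by explicitly arguing \emph{non-congruence} of the four critical configurations (the paper's statement says only ``distinct,'' though the introduction promises non-congruence); your extra step correctly requires the slightly stronger conjugacy-class form of Sakuma's input, namely that any involution conjugate to $\alpha_j$ still generates an infinite group together with $\alpha_i$, and you are right to flag this as the one nontrivial ingredient being quoted.
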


\begin{proof}
In Proposition 2.7~of~\cite{MR1006701}, Sakuma shows that for this knot there are four distinct $\Z/2\Z$ subgroups in $\Sym(L)$ with the property required by Proposition~\ref{prop:sakuma style}. Figure~\ref{fig:sakuma calculation} shows that we may construct symmetric representatives for each of these symmetry groups. 
\end{proof}

\begin{figure}[ht]
\hfill 
\includegraphics[height=1.5in]{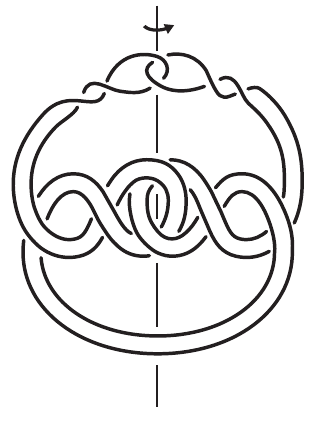} \hfill
\includegraphics[height=1.5in]{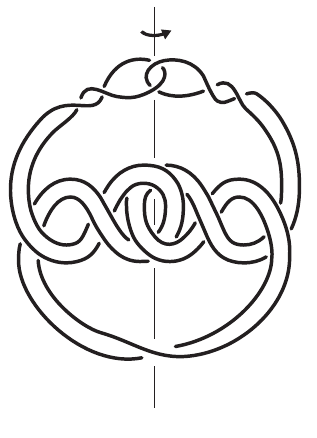} \hfill
\includegraphics[height=1.5in]{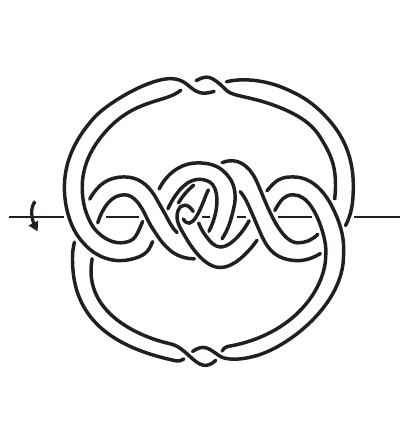} \hfill
\includegraphics[height=1.5in]{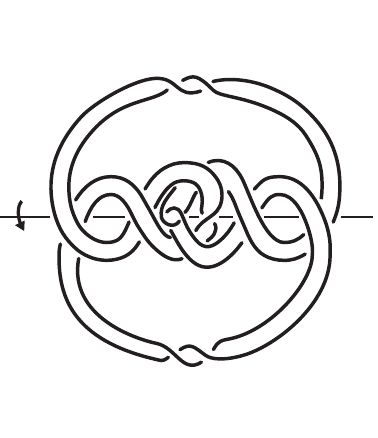} \hfill
\hphantom{.}
\caption{This picture shows four different configurations of the untwisted double $K$ of the twist knot $6_1$, each exhibiting a different subgroup of  $\Sym(K)$ isomorphic to $\Z/2\Z$. Sakuma~\cite{MR1006701} shows that these subgroups have the interesting property that any two generators together generate an infinite subgroup of $\Sym(K)$. This implies by Proposition~\ref{prop:sakuma style} that there are at least four distinct ropelength-critical configurations of $K$ which are $\Z/2\Z$ invariant.
\label{fig:sakuma calculation}}
\end{figure}

\section{Future Directions}

We have extended symmetric criticality, a standard tool in the analysis of geometric functionals, to the study of ropelength and given a few examples of how this principle may be used to construct various ropelength-critical symmetric knots and links. It is clear that there are many more constructions of this kind available. For instance, we have only shown examples of ropelength-critical knots with cyclic and dihedral symmetry, since these are the only symmetries possible for a knot. For links, it would be interesting to construct and numerically tighten links exhibiting the other point groups in $\Orth$. For instance, the ropelength-critical Borromean rings in~\cite{Cantarella:2011vq} has order 24 pyritohedral symmetry. We note that while our paper~\cite{Cantarella:2011vq} also covered the case of open curves with various endpoint constraints, we did not consider those cases here. We do not expect there to be any essential difficulty in carrying out such an extension, only some additional technical details to keep track of.

When considering the mirror-symmetric composite knots, we proved that a composite in the form $K \# K^m$ has a ropelength-critical configuration which is mirror-symmetric over a plane. It would be interesting to show that this configuration must have two straight segments where it crosses the plane, using an analysis similar to the methods in~\cite{Cantarella:2011vq}. 

Lastly, it would be very interesting to extend other general tools from the analysis of geometric functionals to this setting. A natural candidate would be the ``mountain pass principle''~\cite{MR2012778}: Given a smooth functional $f \co X \rightarrow \R$, whenever one has two points $x$ and $x'$ so that every path in $X$ from $x$ to $x'$ contains a point $x''$ where $f(x'') > f(x)$ and $f(x'') > f(x')$, there exists an unstable critical point of $f$ along a path joining $x$ and $x'$. The theorem of Coward and Hass is expected to be valuable in this context, as it gives a concrete example of a pair of configurations $L$ and $L'$ of a link for which such conditions could be established for ropelength.

\section{Acknowledgements}

The idea of finding symmetric configurations of knots which are critical for various energies is surely quite classical, and we must be among many mathematicians who have thought about this question. We have not attempted to give a comprehensive history of related work. We were first introduced to the idea of finding different critical configurations of the trefoil using symmetry by Moffatt's 1990 paper. Our intuition was sharpened by productive conversations with many colleagues over the intervening years, especially an intermittent series of conversations over the past decade with Rob Kusner and John Sullivan. We would also like to thank Ryan Budney. It is worth noting that while the work above to extract the symmetric criticality theorem is fairly elementary, this is only because the difficult part of the theorem is proved in~\cite{Cantarella:2011vq}: without those methods, these questions would have remained quite challenging to approach. Cantarella would also like to acknowledge the gracious hospitality of the Issac Newton Institute in Fall 2012.

\newpage

\bibliographystyle{alpha}
\bibliography{symmetric,papers}

\end{document}